\newcommand\R{\mathbb{R}}
\newcommand\N{\mathbb{N}}
\newcommand\norm[2][]{{\left\lVert#2\right\rVert_{#1}}}
\newcommand\B{\mathbb{B}}
\newtheorem{theo}{Theorem}
\newtheorem{lemma}[theo]{Lemma}
\newtheorem{df}{Definition}
\newtheorem{remark}{Remark}
\newtheorem{assumption}[theo]{Assumption}
\def\ve{{\varepsilon}}
\def\Ga{{\cal G}_\alpha}
\def\Ger{{\cal G}_{\ve,\alpha}}
\def\.{{\;}}
\def\bv{{\tt{BV}}}
\def\lzo{{L^2(\Omega)}}
\begin{document}
\setcounter{footnote}{1}

\title{On a level-set method for ill-posed problems with piecewise non-constant coefficients}

\author{
 A.~De Cezaro%
 \thanks{Institute of Mathematics Statistics and Physics,
        Federal University of Rio Grande, Av. Italia km 8, 96201-900
        Rio Grande, Brazil
        (\href{mailto:adrianocezaro@furg.br}{\tt adrianocezaro@furg.br}).}
 }

\date{}

\maketitle

\begin{small}
\begin{abstract}
We investigate a level-set type method for solving ill-posed
problems, with the assumption  that the solutions are piecewise, but
not necessarily constant functions with unknown level  sets and
unknown level values.   In order to get stable approximate solutions
of the inverse problem we propose a Tikhonov-type  regularization
approach coupled with a level set framework. We prove the existence
of generalized minimizers for the Tikhonov functional. Moreover, we
prove convergence and  stability for regularized solutions with
respect to the noise level, characterizing the level-set  approach
as a regularization method for inverse problems. We also show the
applicability of the proposed level set method in some interesting
inverse problems arising in elliptic PDE models.
\end{abstract}
\end{small}

\paragraph{Keywords:} Level Set Methods, Regularization, Ill-Posed Problems, Piecewise Non-Constant Coefficients.

\section{Introduction} \label{sec:1}

Since the seminal paper of Santosa \cite{San95}, level set
techniques have been successfully developed and have recently become
a standard technique for solving inverse problems with interfaces
(e.g., \cite{BurOsher05, CBGI09, CVK10, DCMYVO10, DL09, MS08, NLT07,
TO05, DA06}).

In many applications, interfaces represent interesting physical
parameters (inhomogeneities,  heat conductivity between materials
with different heat capacity, interface diffusion problems)  across
which one or more of these physical parameters change value in a
discontinuous manner. The interfaces divide the domain
$\Omega\subset \mathbb{R}^n$ in subdomains $\Omega_j$, with $j=1,
\cdots, k$, of different regions  with specific internal parameter
profiles. Due to the different physical structures of each of these
regions, different mathematical models might be the most appropriate
for describing  them. Solutions of such models represent a free
boundary problem, i.e., one in which interfaces  are also unknown
and must be determined in addition to the solution of the governing
partial differential equation. In general such solutions are
determined by a set of data obtained by indirect  measurements
\cite{BurOsher05, CBGI09, ChungChanTai05,  ChungVese03, CVK10,
LieLysTai06, TaiYao, VeseOsher04}. Applications include image
segmentation problems \cite{ChungVese03, LieLysTai06, TaiYao,
VeseOsher04}, optimal shape designer problems \cite{TaiChan04,
BurOsher05}, Stefan's type problems \cite{BurOsher05}, inverse
potential problems \cite{CLT08, DCLT2010, DCLT09}, inverse
conductivity/resistivity problems \cite{ChungChanTai05, CVK10,
DCMYVO10, Isa90, DA06} among others \cite{BurOsher05, CBGI09, CVK10,
DL09, TaiChan04}.

There is often a large variety of priors information available for
determining the unknown physical parameter, whose characteristic
depends on the given application. In this article, we are interested
in inverse problems that consist in the identification of an unknown
quantity $u \in D(F) \subset X$ that represents all parameter
profiles inside the individual subregions of $\Omega$, from data $y
\in Y$, where $X$ and $Y$ are Banach spaces and $D(F)$ will be
adequately specified in Section~\ref{sec:min-concept}. In this
particular case, only the interfaces between the different regions
and, possibly, the unknown parameter values need to be reconstructed
from the gathered data. This process can be formally described by
the operator equation
\begin{equation}\label{eq:inv-probl}
F(u) \ = \ y \, ,
\end{equation}
where $F: D(F) \subset X \to Y$ is the forward operator.

Neither existence nor uniqueness of a solution to
\eqref{eq:inv-probl} are guarantee. For simplicity, we assume that
for exact data $y \in Y$, the operator equation \eqref{eq:inv-probl}
admit a solution and we do not strive to obtain results on
uniqueness. However, in practical applications, data are obtained
only by indirect measurements of the parameter. Hence, in general,
exact data $y \in Y$ are not known and we have only access to noise
data $y^\delta \in Y$, whose level of noise $\delta
> 0$ are assumed be known \textit{a priori} and satisfies
\begin{equation}\label{eq:noisy-data}
\| y^\delta - y \|_Y \ \le \ \delta \, .
\end{equation}

We assume that  the inverse problem associated with the operator
equation~\eqref{eq:inv-probl} is ill-posed. Indeed, it is the case
in many interesting problems \cite{CVK10, DL09, EngHanNeu96, HR96,
Isa90, TaiChan04, DA06}. Therefore, accuracy of an approximated
solution call for a regularization method \cite{EngHanNeu96}. In
this article we propose a Tikhonov-type regularization method
coupled with a level-set approach to obtain a stable approximation
of the unknown level sets and values of the piecewise (not
necessarily constant) solution of \eqref{eq:inv-probl}.

Many approaches, in particular level set type approaches, have
previously been suggested for such problems . In \cite{ChanTai03,
ChanTai04, ChungChanTai05, FSL05, LS03, San95}, level set approaches
for identification of the unknown parameter $u$ with distinct, but
known, piecewise constant values were investigated.

In \cite{ChanTai04, ChungVese03, CLT08}, level set approaches were
derived to solve inverse problems, assuming that $u$ is defined by
several distinct constant values. In both cases, one needs only to
identify the level sets of $u$, i.e. the inverse problem reduces to
a shape identification problem. On the other hand, when the level
values of $u$ are also unknown, the inverse problem becomes harder,
since, we have to identify both the level sets and the level values
of the unknown parameter $u$. In this situation, the dimension of
the parameter space increases by the number of unknown level values.
Level set approaches to ill- posed  problems with unknown constant
level values appeared before in \cite{DCLT09, DCLT2010, TaiChan04,
TaiHongwei07, TaiYao}. Level set  regularization properties of the
approximated solution for inverse problems are described in
\cite{Burger01, CLT08, DCLT2010, DCLT09, FSL05}.

However, regularization theory for inverse problems where the
components of the parameter $u$ are variable and have
discontinuities have not been well investigated. Indeed, level set
regularization theory applied to inverse problems \cite{DCLT2010,
CLT08, DCLT09} that recover the shape and the values of variable
discontinuous coefficients are unknown to the author. Some early
results in the numerical implementation of level set type methods
were previously used to obtain solutions of elliptic problems with
discontinuous and variable coefficients in \cite{CVK10}.

In this article, we propose a level set type regularization method
to ill-posed problems whose solution is composed by piecewise
components which are not necessarily constants. In other words, we
introduce a level set type regularization method to recover the
shape and the values of variable discontinuous coefficients. In this
framework a level set function is used to parameterized the solution
$u$ of \eqref{eq:inv-probl}. We obtain a regularized solution using
a Tikhonov-type regularization method. Since the level values of $u$
are not-constant and also unknown.

In the theoretical point of view, the advantage of our approach in
relation to \cite{BurOsher05, DCLT09, CLT08, DCLT2010, FSL05, DAL09}
is that we are able to obtain regularized solutions to inverse
problems with piecewise solutions that are more general than those
covered by the regularization methods proposed before. We still
prove regularization properties for the approximated solution of the
inverse problem model \eqref{eq:inv-probl}, where the parameter is a
non-constant piecewise solution. The topologies needed to guarantee
the existence of a minimizer (in a generalized sense) of the
Tikhonov functional (define below in \eqref{eq:mc}) is quite
complicated and differ in some key points from \cite{DCLT2010,
DCLT09, FSL05}. In this particular approach, the definition of
generalized minimizers are quite different from other works
\cite{DCLT09, CLT08, FSL05} (see Definition~\ref{def:quadruple}). As
a consequence,  the arguments used to prove the well-posedeness of
the Tikhonov functional, the stability and convergence of the
regularized solutions of the inverse problem \eqref{eq:inv-probl}
are quite complicated and  need significant improvements (see
Section~\ref{sec:min-concept}).

The main applicability advantage of the proposed  level set type
method compared to those in the literature is that we are able to
apply this method to  problems whose solutions depend of
non-constant parameters. This implies that we are able to  handle
more general and interesting physical problems, where de components
of the desired parameter is not necessarily homogeneous, as those
presented before in the literature \cite{Ber04, CVK10,  DL09,
DCLT09, DCLT2010, TaiChan04, TaiHongwei07, TaiYao, TL07, DAP11}.
Examples of such interesting physical problems are heat conduction
between materials of different heat capacity and conductivity,
interface diffusion processes and  many other types of physical
problems where modeling components are related with embedded
boundaries. See for example \cite{Ber04, DCLT09, CBGI09, CVK10,
DL09, DAP11} and references therein. As a benchmark problem we
analyze two inverse problems modeled  by elliptic PDE's with
discontinuous and variable coefficients.

In contrast, the non-constant characteristics of the level values
impose different types of theoretical problems, since the topologies
where we are able to provide regularization properties of the
approximated solution are more complicated than the  ones presented
before \cite{DCLT09, DCLT2010, TaiChan04, TaiHongwei07, TaiYao}. As
a consequence, the numerical implementations becomes harder than the
others  approaches in the literature \cite{DCLT09, DCLT2010, DAP11,
DAL09}.

The paper is outlined as follows: In
Section~\ref{sec:level-set-formularion}, we formulate the Tikhonov
functional based on the level-set framework. In
Section~\ref{sec:min-concept}, we present the general assumptions
needed in this article and the definition of the set of admissible
solutions. We prove relevant properties about the admissible set of
solutions, in particular, convergence in suitable topologies. We
also present relevant properties of the penalization functional. In
Section~\ref{sec:conv-an}, we prove that the proposed method is a
regularization method to inverse problems, i.e., we prove that the
minimizers of the proposed Tikhonov functional are stable and
convergent with respect to the noise level in the data. In
Section~\ref{sec:num-sol}, a smooth functional is proposed to
approximate minimizers of the Tikhonov functional defined in the
admissible set of solutions. We provide approximation properties and
the optimality condition for the minimizers of the smooth Tikhonov
functional. In Section~\ref{sec:numeric}, we present an application
of the proposed framework to solve some interesting inverse elliptic
problems  with variable coefficients. Conclusions and future
directions are presented in Section~\ref{sec:conclusions}.

\section{The Level-set Formulation}\label{sec:level-set-formularion}

Our starting point is the assumption that the parameter $u$ in
\eqref{eq:inv-probl} assumes two unknown functional values, i.e.,
$u(x) \in \{ \psi^1(x), \psi^2(x) \}$ a.e. in $\Omega \subset
\mathbb R^d$, where $\Omega$ is a bounded set. More specifically, we
assume the existence of a mensurable set $D \subset\subset \Omega$,
with $0<|D|< |\Omega|$, such that $u(x) = \psi^1(x)$ if $x \in D$
and $u(x) = \psi^2(x)$ if $x \in \Omega/D$. With this framework, the
inverse problem that we are interested in this article is the stable
identification of both the shape of $D$ and the value function
$\psi^j(x)$ for $x$ belonging to $D$ and to $\Omega/D$,
respectively, from observation of the data $y^\delta \in Y$.

We  remark that, if $\psi^1(x)=c^1$ and $\psi^2(x)=c^2$  with $c^1$
and $c^2$ unknown constants values, the problem of identifying  $u$
was rigorously studied before in \cite{DCLT09}. Moreover, many other
approaches to  this case appear in the literature, see \cite{DCLT09,
ChanTai04, ChanTai03, BurOsher05} and references therein. Recently,
in \cite{DCLT2010}, a $L^2$ level set approach to identify the level
and constant contrast was investigated.

Our approach differs from the level set methods proposed in
\cite{DCLT2010, DCLT09}, by considering also the identification of
variable unknown levels of the parameter $u$. In this situation,
many topological difficulties appear in order to have a tractable
definition of an admissible set of parameters (see
Definition~\ref{def:quadruple} below). Generalization to problems
with more than two levels are possible applying this approach and
following the techniques derived in \cite{CLT08}. As observed
before, the present level set approach is a rigorous derivation of a
regularization strategy for identification of the shape and
non-constant levels of  discontinuous parameters. Therefore, it can
be applied to physical problems modeled by embedded  boundaries
whose components are not necessarily piecewise constant \cite{FSL05,
BurOsher05, CLT08, DCLT2010, DCLT09}.

In many interesting applications, the inverse problem modeled by
equation~\eqref{eq:inv-probl} is ill-posed. Therefore a
regularization method must be applied in order to obtain a  stable
approximate solution. We propose a regularization method by: First,
introduce a parametrization on the parameter space, using a level
set function $\phi$ that belongs to $H^1(\Omega)$. Note that, we can
identify the distinct level sets of the function $\phi\in
H^1(\Omega)$ with the definition of the Heaviside projector
\begin{align*}
H:\, & H^1(\Omega)  \longrightarrow  L^\infty(\Omega)\\
   &  \,\, \phi  \,\, \, \, \longmapsto \,\,   H(\phi):=\begin{cases}
                                      1 \,\, \mbox{ if } \,\,
                                      \phi(x) > 0\,,\\
                                      0 \,\, \mbox{ other else}\,.
                                    \end{cases}
\end{align*}
Now, from the framework introduced above,  a solution $u$ of
\eqref{eq:inv-probl}, can be represented as
\begin{align} \label{eq:def-P}
u(x) \ = \ \psi^1(x) H(\phi)  +  \psi^2(x) (1 - H(\phi)) \ =: \
P(\phi,\psi^1,\psi^2)(x) \, .
\end{align}
With this notation, we are able to determine the shapes of $D$  as $
\{x \in \Omega\, ;\ \phi(x) > 0 \}$ and $\Omega / D$ as $ \{x \in
\Omega\, ;\ \phi(x) < 0 \}$.

The functional level values $\psi^1(x)$, $\psi^2(x)$ are also
assumed be unknown and they should be  determined as well.

\begin{assumption}\label{ass:1}
We assume that $\psi^1, \psi^2\, \in \B:=\{f: f \mbox{ is measurable
and}\, f(x) \in [m, M]\,, a.e. \,\, in\, \Omega\}$, for some
constant values $m, M$.
\end{assumption}

\begin{remark}
We remark that, $f \in \B$ implies that $f\in L^\infty(\Omega)$.
Since $\Omega$ is bounded $f\in L^1(\Omega)$. Moreover,
\begin{align*}
\int_\Omega f(x) \nabla\cdot \varphi(x) dx  \leq |M| \int_\Omega
|\nabla\cdot (\varphi)(x)|dx \leq |M| \norm{\nabla
\cdot\varphi}_{L^1(\Omega)}\,,\, \forall \varphi \in C_0^1(\Omega,
\mathbb{R}^n)\,.
\end{align*}
Hence $f \in \bv(\Omega)$.
\end{remark}

Note that, in the case that $\psi^1$ and $\psi^2$ assumes two
distinct constant values (as covered by the analysis done in
\cite{BurOsher05, DCLT09, DCLT2010} and references therein) the
assumptions above are satisfied. Hence, the level  set approach
proposed here generalizes the regularization theory developed in
\cite{DCLT09, DCLT2010}.

From \eqref{eq:def-P}, the inverse problem in \eqref{eq:inv-probl},
with data given as in \eqref{eq:noisy-data}, can be abstractly
written as the  operator equation
\begin{equation} \label{eq:inv-probl-mls}
F( P(\phi, \psi^1,\psi^2) ) \ = \ y^\delta \, .
\end{equation}

Once an approximate solution  $(\phi, \psi^1,\psi^2)$ of
\eqref{eq:inv-probl-mls} is obtained, a corresponding solution of
\eqref{eq:inv-probl} can be computed using equation
\eqref{eq:def-P}.

Therefore, to obtain a regularized approximated solution to
\eqref{eq:inv-probl-mls}, we shall consider the least square
approach combined with a regularization term i.e., minimizing the
Tikhonov functional

\begin{equation} \label{eq:mc}
\hat{\Ga}(\phi,\psi^1,\psi^2):= \| F(P(\phi,\psi^1,\psi^2)) -
y^\delta \|^2_{Y}
   + \alpha \Big\{ \beta_1 |H(\phi)|_\bv
   +\beta_2 \| \phi - \phi_0\|^2_{H^1(\Omega)}
   + \beta_3 {\textstyle\sum\limits_{j=1}^2 |\psi^j-\psi^j_0|_\bv} \Big\} \, ,
\end{equation}
where, $\phi_0$ and $\psi^j_0$ represent some \textit{a priori}
information about the true solution $u^*$ of \eqref{eq:inv-probl}.
The parameter $\alpha > 0$ plays the role of a regularization
parameter and the values of $\beta_i\,, i=1,2,3$ act as scaling
factors. In other words, $\beta_i\,, i=1,2,3$ need to be chosen
\textit{a priori}, but independent of the noise level $\delta$. In
practical, $\beta_i\,, i=1,2,3$ can be chosen in order to represent
\textit{a priori} knowledge of features the of the parameter
solution $u$ and/or to improve the numerical algorithm. A more
complete discussion about how to choose $\beta_i\,, i=1,2,3$ are
provided in \cite{DCLT2010, DCLT09, CLT08}.

The regularization strategy in this context is based on $TV-H^1-TV$
penalization. The term on $H^1$-norm acts simultaneously as a
control on the size of the norm of the level set function and a
regularization on the space $H^1$. The term on $\bv$ is a
variational measure of $H(\phi)$. It is well known that the
$BV$-semi-norm acts as a penalizing for the length of the Hausdorff
measure of the boundary of the set $\{x \,:\,\phi(x) > 0\}$  (see
\cite[Chapter 5]{EG92} for details). Finally, the last term on $\bv$
is a variational measure of $\psi^j$ that acts as a regularization
term on the set $\B$. This Tikhonov functional extends the ones
proposed in \cite{DCLT09, CLT08, ChanTai03, ChanTai04, TaiChan04}
(based on $TV$-$H^1$ penalization).

Existence of minimizers for the functional \eqref{eq:mc}, in the
$H^1\times \B^2$ topology does not follow by direct arguments,
since, the operator $P$ is not necessarily continuous in this
topology. Indeed, if $\psi^1 = \psi^2 =\psi$ is a continuous
function at the contact region, then $P(\phi^1,\psi^2, \psi)= \psi$
is continuous and the standard Tikhonov regularization theory to the
inverse problem holds true \cite{EngHanNeu96}. On the other hand, in
the interesting case where $\psi^1$ and $\psi^2$ represents the
level of discontinuities of the parameter  $u$, the analysis becames
more complicated and we need a definition of generalized minimizers
(see Definition~\ref{def:quadruple}) in order to handle with these
difficulties.

\section{Generalized Minimizers} \label{sec:min-concept}

As already observed in \cite{FSL05}, if $D \subset \Omega$ with
$\mathcal{H}^{n-1}(\partial D) < \infty$ where
$\mathcal{H}^{n-1}(S)$ denotes the (n-1)-dimensional
Hausdorff-measure of the set $S$, then the Heaviside operator $H$
maps $H^1(\Omega)$ into the set $$\mathcal{V} :=  \{ \chi_{D} \, ; \
D \subset \Omega \mbox{ measurable},\,:\, \mathcal{H}^{n-1}(\partial
D) < \infty \}\,.$$ Therefore, the operator $P$ in \eqref{eq:def-P}
maps $H^1(\Omega) \times \B^2 $ into the admissible parameter set
$$D(F) := \{ u = q(v,\psi^1,\psi^2)\,;\, v \in \mathcal V
\mbox{ and } \psi^1, \psi^2 \in \B \}\,,$$ where $$q: \mathcal{V}
\times \B^2 \ni (v,\psi^1,\psi^2) \mapsto \psi^1 v+\psi^2(1-v) \in
BV(\Omega)\,.$$

Consider the model problem described in the introduction. In this
article, we assume that:

\begin{minipage}{0.96\textwidth}
\noindent {\bf (A1)} $\Omega \subseteq \R^n$ is bounded with
piecewise $C^1$ boundary $\partial \Omega$.

\noindent {\bf (A2)} The operator $F:D(F)\,\subset L^1(\Omega) \to
Y$ is continuous on $D(F)$ with respect to the
$L^1(\Omega)$-topology.

\noindent {\bf (A3)}
$\ve$, $\alpha$ and $\beta_j\,,j=1,2,3$ denote positive parameters.

\noindent {\bf (A4)} \label{ass:5} Equation (\ref{eq:inv-probl}) has
a solution, i.e. there exists $u_* \in{D(F)}$ satisfying $F(u_*)=y$
and a function $\phi_* \in H^1(\Omega)$ satisfying $|\nabla \phi_*|
\neq 0$, in the neighborhood of $\{\phi_* = 0\}$ such that $H(\phi_*)
= z_*$, for some $z_* \in \mathcal{V}$. Moreover, there exist
functional values $\psi^1_*, \psi^2_* \in \B$ such that
$q(z_*,\psi^1_*,\psi^2_*) = u_*$.
\end{minipage}
\bigskip

\noindent For each $\ve > 0$, we define  a smooth approximation to
the operator $P$ by
\begin{equation} \label{eq:def-Pve}
P_\ve(\phi,\psi^1,\psi^2) \ := \ \psi^1 H_\ve(\phi)  + \psi^2(1 -
H_\ve(\phi))\; ,
\end{equation}
where $H_\ve$ is the smooth approximation to $H$  described by
$$
H_\ve(t) := \left\{
  \begin{array}{rl}
    1 + t/\ve  & \mbox{ for \ } t \in \left[-\ve,0\right] \\
    H(t)       & \mbox{ for \ } t \in \mathbb{R} / \left[-\ve,0\right] \\
\end{array} \right. .
$$

\begin{remark}
It is worth noting that, for any $\phi_k \in H^1(\Omega)$,
$H_\ve(\phi_k)$  belongs to $L^\infty(\Omega)$ and satisfies $0\leq
H_\ve(\phi_k)\leq 1$ a.e. in $\Omega$, for all $\ve
> 0$. Moreover, taking into account that $\psi^j \in \mathbb{B}$,
follows that the operators $q$ and $P_\ve$, as above, are well
defined.
\end{remark}

In order to guarantee the existence of a minimizer of ${\cal G}_\alpha$
defined in \eqref{eq:mc} in the space $H^1(\Omega) \times \B^2$, we
need to introduce a suitable topology such that the functional ${\cal
G}_\alpha$ has a closed graphic. Therefore, the concept of generalized
minimizers (compare with \cite{CLT08, FSL05}) in this paper is:

\begin{df} \label{def:quadruple}
Let the operators $H$, $P$, $H_\ve$ and $P_\ve$ be defined as above
and the positive parameters $\alpha, \beta_j$ and $\ve$ satisfying
the Assumption~\textbf{(A3)}.
\medskip

 A {\bf quadruple} $(z,\phi,\psi^1,\psi^2) \in
L^\infty(\Omega) \times H^1(\Omega) \times \bv(\Omega)^2$ is called
{\bf admissible} when:\newline
 \noindent a) There exists a sequence $\{
\phi_k \}$ of $H^1(\Omega)$-functions satisfying
$\lim\limits_{k\to\infty} \| \phi_k - \phi \|_{L^2(\Omega)} = 0$.

\noindent b) There exists a sequence $\{ \ve_k \} \in \mathbb R^+$
converging to zero such that $\lim\limits_{k\to\infty} \|
H_{\ve_k}(\phi_k)-z \|_{L^1(\Omega)} = 0$.
\medskip

\noindent c) There exist  sequences $\{ \psi^1_k \}_{k\in\mathbb{N}}
\mbox{ and }  \{ \psi^2_k \}_{k\in\mathbb{N}}$ belonging to $\bv \cap
C^\infty(\Omega)$ such that
$$
|\psi^j_k|_\bv \longrightarrow |\psi^j|_\bv\,,\quad j=1,2\,.
$$

\noindent d) A {\bf generalized minimizer} of $\hat{\Ga}$ is
considered to be any admissible quadruple $(z,\phi,\psi^1,\psi^2)$
minimizing
\begin{equation} \label{eq:gzphi}
{\cal{G}}_\alpha(z,\phi,\psi^1,\psi^2) :=
     \norm{ F(q(z,\psi^1,\psi^2)) - y^\delta }_Y^2 + \alpha
     R(z,\phi,\psi^1,\psi^2)
\end{equation}
on the set of admissible quadruples. Here the functional $R$ is
defined by
\begin{eqnarray}\label{def:R}
 R(z,\phi,\psi^1,\psi^2) \ = \ \rho(z,\phi) +
  \beta_3 {\textstyle\sum\limits_{j=1}^2 |\psi^j-\psi^j_0|_\bv} \,
\end{eqnarray}
and the functional $\rho$ is defined as
\begin{eqnarray}\label{eq:rho}
 \rho(z,\phi) := \inf \Big\{ \liminf\limits_{k\to\infty}
         \Big[ \beta_1 |H_{\ve_k} (\phi_k)|_\bv +
         \beta_2 \| \phi_k - \phi_0 \|_{H^1(\Omega)}^2 \Big] \Big\} \, .
         \end{eqnarray}
The infimum in \eqref{eq:rho} is taken over all sequences
$\{\ve_k\}$ and $\{\phi_k \}$ characterizing
$(z,\phi,\psi^1,\psi^2)$ as an admissible quadruple.
\medskip
\end{df}

The convergence $ |\psi^j_k|_\bv \longrightarrow |\psi^j|_\bv$ in Item
c) in Definition~\ref{def:quadruple} is in the sense of variation
measure \cite[Chapter 5]{EG92}. The incorporation of item c) in the
Definition~\ref{def:quadruple} implies the existence of the
$\Gamma$-limit of sequences of admissible quadruples \cite{FSL05,
  AcaVog94}. This appears in the proof of Lemmas~\ref{lemma:auxil},
\ref{lemma:limit-ad} and \ref{lemma:rho-lsc}, where we proove that the
set of admissible quadruples are closed in the defined topology (see
Lemmas~\ref{lemma:auxil} and \ref{lemma:limit-ad}) and in the weak
lower semi-continuity of the regularization functional $R$ (see
Lemma~\ref{lemma:rho-lsc}). The identification of non-constant level
values $\psi^j$ imply in a different definition of admissible
quadruples.

As a consequence, the arguments in the proof of regularization
properties of the level set approach are the principal theoretical
novelty and the  difference between our definition of admissible
quadruples and the ones in \cite{DCLT2010, DCLT09, FSL05}.

\begin{remark}\label{remark-ad}
For $j=1,2$  let $\psi^j\in \B \cap C^\infty(\Omega)$, $\phi\in
H^1(\Omega)$ be such that $|\nabla \phi| \neq 0 $ in the neighborhood
of the level set $\{\phi(x)=0 \}$ and $H(\phi)=z \in \mathcal{V}$.
For each $k\in \N$ set $\psi_k^j=\psi^j$ and $\phi_k =\phi$. Then,
for all sequences of $\{\ve_k\}_{k\in \N}$ of positive numbers
converging to zero, we have
\begin{align*}
\norm{H_{\ve_k}(\phi_k) - z}_{L^1(\Omega)} & =
\norm{H_{\ve_k}(\phi_k) - H(\phi)}_{L^1(\Omega)} =
\int_{(\phi)^{-1}[-\ve_k,0]}\left| 1-\frac{\phi}{\ve_k}\right|dx \\
& \leq \int_{-\ve_k}^0\int_{(\phi)^{-1}(\tau)}1d\tau \leq
meas\{(\phi)^{-1}(\tau)\}\int_{-\ve_k}^01dt \longrightarrow 0\,.
\end{align*}
Here, we use the fact that $|\nabla \phi|\neq 0$ in the neighborhood
of $\{\phi =0\}$ implies that $\phi$ is a local diffeomorphism
together with a co-area formula \cite[Chapter 4]{EG92}. Moreover,
$\{\psi_k^j\}_{k\in \N}$ in $\B \cap C^\infty(\Omega)$ satisfyes
Definition~\ref{def:quadruple}, item c).

Hence, $(z,\phi,\psi^1,\psi^2)$ is an admissible quadruple. In
particular, we conclude from the general assumption above that the
set of admissible quadruple satisfying $F(u)=y$ is not empty.
\end{remark}

\subsection{Relevant Properties of Admissible
Quadruples}\label{subsec:relevant-propoerties}

Our first result is the proof of the continuity properties  of
operators $P_\ve$, $H_\ve$ and $q$ in suitable topologies. Such
result will be necessary in the subsequent analysis.

We start with an auxiliary lemma that is well known (see for example
\cite{DZ94}). We present it here for the sake of completeness.

\begin{lemma}\label{lemma:0}
Let $\Omega$ be a measurable subset of $\mathbb{R}^n$ with finite
measure.

If $(f_k) \in \B$ is a convergent sequence in $L^p(\Omega)$ for some
$p$, $1\leq p<\infty$, then it is a convergent sequence in
$L^p(\Omega)$ for all $1\leq p < \infty$.

In particular Lemma~\ref{lemma:0} holds for the sequence $z_k: =
H_\varepsilon(\phi_k)$.
\end{lemma}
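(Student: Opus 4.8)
The plan is to exploit the defining feature of $\B$, namely the uniform pointwise bound: every $f \in \B$ satisfies $|f(x)| \le C := \max\{|m|,|M|\}$ a.e.\ in $\Omega$. I would first record that the limit also enjoys this bound. Assume $f_k \to f$ in $L^p(\Omega)$ for some fixed $p \in [1,\infty)$. Extracting a subsequence that converges a.e.\ to $f$ and passing to the limit in the inequality $m \le f_{k_j} \le M$ shows $m \le f \le M$ a.e., so $f \in \B$; in particular $|f_k - f| \le 2C$ a.e.\ for every $k$. This preliminary step is the only delicate point, and it is handled cleanly by the a.e.-convergent subsequence.

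With the uniform bound in hand, I would split the verification of $L^q$-convergence into two cases according to whether $q \le p$ or $q > p$. For $1 \le q \le p$, the finiteness of $|\Omega|$ together with H\"older's inequality yields the continuous embedding $L^p(\Omega) \hookrightarrow L^q(\Omega)$, with
\[
\|g\|_{L^q(\Omega)} \le |\Omega|^{\,1/q - 1/p}\,\|g\|_{L^p(\Omega)}\,;
\]
applying this to $g = f_k - f$ gives $\|f_k - f\|_{L^q(\Omega)} \to 0$ directly from the assumed $L^p$-convergence.

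For the remaining case $q > p$, I would factor the integrand as $|f_k - f|^q = |f_k - f|^{q-p}\,|f_k - f|^p$ and dominate the first factor by the constant $(2C)^{q-p}$ coming from the uniform bound. This produces
\[
\|f_k - f\|_{L^q(\Omega)}^q \le (2C)^{q-p}\,\|f_k - f\|_{L^p(\Omega)}^p,
\]
whose right-hand side tends to $0$ as $k \to \infty$; taking $q$-th roots completes the estimate. The final assertion follows by observing that $z_k := H_\ve(\phi_k)$ takes values in $[0,1]$ a.e., hence lies in $\B$ (with $m=0$, $M=1$), so the statement applies verbatim. I do not expect a genuine obstacle here: once the $L^\infty$-bound on $\B$ is invoked, the argument is a standard interpolation between the finite-measure embedding (low exponents) and boundedness-driven domination (high exponents).
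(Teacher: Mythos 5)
Your argument is correct. Note, however, that the paper does not actually prove this lemma: it simply states it ``for the sake of completeness'' and defers to \cite[Lemma 2.1]{DZ94}, so there is no internal proof to compare against. What you supply is the standard self-contained argument, and it is sound: the uniform bound $|f_k|\le C:=\max\{|m|,|M|\}$ built into the definition of $\B$, the a.e.-convergent subsequence to confirm that the limit $f$ also lies in $\B$ (so $|f_k-f|\le 2C$ a.e.), the finite-measure H\"older embedding $\|g\|_{L^q(\Omega)}\le|\Omega|^{1/q-1/p}\|g\|_{L^p(\Omega)}$ for $q\le p$, and the domination $\|f_k-f\|_{L^q(\Omega)}^q\le(2C)^{q-p}\|f_k-f\|_{L^p(\Omega)}^p$ for $q>p$. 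The final observation that $z_k=H_\ve(\phi_k)$ takes values in $[0,1]$, hence belongs to $\B$ with $m=0$, $M=1$, correctly covers the last assertion of the lemma. The only stylistic remark is that the conclusion ``$f\in\B$'' from the a.e. subsequence is slightly more than needed (one only needs the uniform bound on $|f_k-f|$), but it is a clean way to get it and costs nothing. Your proof could be inserted verbatim where the paper currently only has a citation.
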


\begin{proof}
See \cite[Lemma 2.1]{DZ94}.
\end{proof}

The next two lemmas are auxiliary results in order to understand the
definition of the set of admissible quadruples.

\begin{lemma}\label{lemma:q-cont}
Let $\Omega$ as in assumption \textbf{(A1)} and $j=1,2$.
\begin{enumerate}
\item[(i)] Let $\{z_k\}_{k\in \mathbb{N}}$ be a sequence in
$L^{\infty}(\Omega)$ with $z_k \in [m,M]$ a.e.  converging in the
$L^1(\Omega)$-norm to some element $z$ and $\{\psi_k^j\}_{k\in
\mathbb{N}}$ be a sequence in $\B$ converging in the $\bv$-norm to
some  $\psi^j\in \B$. Then $ q(z_k,\psi_k^1,\psi_k^2)$ converges to
$ q(z,\psi^1,\psi^2)$ in $L^1(\Omega)$.
\item[(ii)] Let $(z,\phi) \in  L^1(\Omega) \times H^1(\Omega)$, be such that
$H_\ve(\phi)\rightarrow z$ in $L^1(\Omega)$ as $\ve \to 0$ and let
$\psi^1,\psi^2\in \B$. Then $P_\ve(\phi,\psi^1,\psi^2) \rightarrow
q(z,\psi^1,\psi^2)$ in $L^1(\Omega)$ as $\ve \to 0$.
\item[(iii)] Given $\ve >0$, let $\{\phi_k\}_{k\in \N}$ be a sequence
in $H^1(\Omega)$ converging to $\phi \in H^1(\Omega)$ in the
$L^2$-norm. Then $H_\ve(\phi_k) \to H_\ve(\phi)$ in $L^1(\Omega)$,
as $k \to \infty$. Moreover, if $\{\psi_k^j\}_{k\in\mathbb{N}}$ are
sequences in $\B$, converging to some $\psi^j$ in $\B$, with respect
to the $L^1(\Omega)$-norm, then $q(H_\ve(\phi_k),\psi^1_k,\psi^2_k)
\to q(H_\ve(\phi),\psi^1,\psi^2)$ in $L^1(\Omega)$, as $k \to
\infty$.
\end{enumerate}
\end{lemma}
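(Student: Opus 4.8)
The plan is to establish all three items by the same elementary mechanism: expand the difference of the two $q$-values as a sum of products, decompose each product through an add-and-subtract step, and then bound every resulting term by pairing a uniformly $L^\infty$-bounded factor with a factor that tends to zero in $L^1(\Omega)$. Two standing facts drive the argument. First, every element of $\B$ is bounded in modulus by $C:=\max\{|m|,|M|\}$, while $H_\ve$ and any admissible $z_k$ take values in $[0,1]$ respectively $[m,M]$; hence all factors appearing below are uniformly bounded. Second, the $\bv$-norm controls the $L^1$-norm, so $\bv$-convergence of the level values $\psi^j_k$ in item (i) already gives $\norm[L^1(\Omega)]{\psi^j_k-\psi^j}\to 0$.

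For item (i), I would write $q(z_k,\psi^1_k,\psi^2_k)-q(z,\psi^1,\psi^2)$ and insert the cross terms $\psi^j z_k$ and $\psi^j z$ to obtain, for each $j$, a piece of the form $(\psi^j_k-\psi^j)\,w_k$ with $w_k\in\{z_k,\,1-z_k\}$ bounded, and a piece $\psi^j(z_k-z)$ with $\psi^j$ bounded. The first is estimated by $C\,\norm[L^1(\Omega)]{\psi^j_k-\psi^j}\to 0$ and the second by $C\,\norm[L^1(\Omega)]{z_k-z}\to 0$, whence the claim follows from the triangle inequality.

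For item (ii), the computation is even cleaner because the two level values are held fixed: a direct expansion yields the cancellation $P_\ve(\phi,\psi^1,\psi^2)-q(z,\psi^1,\psi^2)=(\psi^1-\psi^2)\bigl(H_\ve(\phi)-z\bigr)$. Since $\psi^1-\psi^2\in L^\infty(\Omega)$ with $\norm[L^\infty(\Omega)]{\psi^1-\psi^2}\le M-m$, the $L^1$-norm of the left side is bounded by $(M-m)\,\norm[L^1(\Omega)]{H_\ve(\phi)-z}$, which tends to zero by hypothesis as $\ve\to 0$.

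For item (iii), the first assertion rests on the Lipschitz continuity of $H_\ve$: since $H_\ve$ is piecewise linear with slope $\ve^{-1}$ on $[-\ve,0]$ and constant elsewhere, one has $|H_\ve(s)-H_\ve(t)|\le \ve^{-1}|s-t|$, so $\norm[L^1(\Omega)]{H_\ve(\phi_k)-H_\ve(\phi)}\le \ve^{-1}\,\norm[L^1(\Omega)]{\phi_k-\phi}$; as $\Omega$ has finite measure, Cauchy--Schwarz turns the $L^2$-convergence $\phi_k\to\phi$ into $L^1$-convergence, and the claim follows. For the ``moreover'' part I would set $z_k:=H_\ve(\phi_k)$ and $z:=H_\ve(\phi)$; the first assertion gives $z_k\to z$ in $L^1(\Omega)$ with $z_k\in[0,1]$, and the $\psi^j_k$ converge in $L^1(\Omega)$, so the very estimate used for item (i) (which only ever invoked $L^1$-convergence of the level values) applies verbatim. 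The single point demanding care is that the Lipschitz constant of $H_\ve$ blows up as $\ve\to 0$, so item (iii) is genuinely a statement for each fixed $\ve$; no uniformity in $\ve$ is claimed or needed here, and this is the sole place where one must resist combining the limits. Beyond this bookkeeping of uniform bounds I expect no serious obstacle.
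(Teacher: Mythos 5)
Your proof is correct and follows essentially the same route as the paper's: expand the difference of the two $q$-values by adding and subtracting cross terms, then pair each uniformly bounded factor with one converging to zero in $L^1(\Omega)$; your item (iii) likewise uses the $\ve^{-1}$-Lipschitz bound for $H_\ve$ together with Cauchy--Schwarz on the finite-measure domain, exactly as in the paper. The one genuine (if minor) divergence is in item (i): for the term $\psi^j(z_k-z)$ the paper estimates it by $\norm[L^2(\Omega)]{\psi^j}\,\norm[L^2(\Omega)]{z_k-z}$ and must invoke Lemma~\ref{lemma:0} to upgrade the $L^1$-convergence of the uniformly bounded sequence $\{z_k\}$ to $L^2$-convergence, whereas you use the pointwise bound $|\psi^j|\le\max\{|m|,|M|\}$ directly against $\norm[L^1(\Omega)]{z_k-z}$, which keeps the whole argument in $L^1$ and dispenses with that auxiliary lemma entirely. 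Your explicit cancellation $P_\ve(\phi,\psi^1,\psi^2)-q(z,\psi^1,\psi^2)=(\psi^1-\psi^2)\bigl(H_\ve(\phi)-z\bigr)$ in item (ii) is also a slightly cleaner way of carrying out what the paper only summarizes as following ``with similar arguments,'' and your closing remark that the $\ve^{-1}$ blow-up of the Lipschitz constant is harmless because $\ve$ is fixed in item (iii) is a correct and worthwhile observation.
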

\begin{proof}
Since $\Omega$ is assumed to be bounded, we have $L^{\infty}(\Omega)
\subset L^{1}(\Omega)$ and $BV(\Omega)$ is continuous embedding in
$L^2(\Omega)$ \cite{EG92}. To prove {\em (i)}, notice that
\begin{align*}
\| q(z_k,\psi^1_k,\psi^2_k)   - q(z,\psi^1,\psi^2)\|_{L^1(\Omega)}
    &     = \norm{\psi^1_k z_k + \psi^2_k(1-z_k) - \psi^1z -
              \psi^2(1-z)}_{L^1(\Omega)}\\
   &
          \leq  \norm{z_k}_{L^\infty(\Omega)}
               \norm{\psi^1_k-\psi^1}_{L^1(\Omega)} + \norm{\psi^1}_{L^2(\Omega)}
               \norm{z_k-z}_{L^2(\Omega)} \\
 & \, + \norm{1-z_k}_{L^\infty(\Omega)}
 \norm{\psi^2_k-\psi^2}_{L^1(\Omega)}+ \norm{\psi^2}_{L^2(\Omega)}
 \norm{z_k-z}_{L^2(\Omega)}  \stackrel{k\to\infty}{\longrightarrow} 0
 \,.
\end{align*}
Here we use Lemma~\ref{lemma:0} in order to guarantee the
convergence of $z_k$ to $z$ in $L^2(\Omega)$.

Assertion {\em (ii)} follows with similar arguments and the fact
that $H_\ve(\phi) \in L^\infty(\Omega)$ for all $\ve >0$.

As  $ \| H_\ve(\phi_k) - H_\ve(\phi) \|_{L^1(\Omega)} \leq \ve^{-1}
\sqrt{ {\rm meas}(\Omega) } \| \phi_k - \phi \|_{L^2(\Omega)}$ the
first part of assertion {\em (iii)} follows. The second part of the
assertion {\em (iii)} holds  by a combination of the inequality
above and inequalities in the proof of assertion {\em (i)}.
\end{proof}

\begin{lemma} \label{lemma:auxil}
Let $\{\psi^j_k\}_{k\in \mathbb{N}}$ be a sequence of functions
satisfying Definition~\ref{def:quadruple} converging in $
L^1(\Omega)$ to some $\psi^j$, for $j=1,2$. Then $\psi^j$ also
satisfies Definition~\ref{def:quadruple}.
\end{lemma}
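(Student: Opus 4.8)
The plan is to show that the $L^1$-limit $\psi^j$ again fulfils item c) of Definition~\ref{def:quadruple}, which means establishing two things: that $\psi^j \in \B \cap \bv(\Omega)$, and that $\psi^j$ admits a sequence in $\bv \cap C^\infty(\Omega)$ whose variation measures converge to $|\psi^j|_\bv$. The first task is to pin down membership in $\B$. Since $\psi^j_k \to \psi^j$ in $L^1(\Omega)$, I would extract a subsequence converging pointwise almost everywhere; because each $\psi^j_k$ takes values in $[m,M]$ a.e., this bound is inherited by the a.e.\ limit, so $\psi^j(x)\in[m,M]$ a.e.\ and hence $\psi^j\in\B$. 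By the Remark following Assumption~\ref{ass:1}, this already yields $\psi^j\in\bv(\Omega)$, so $|\psi^j|_\bv<\infty$; alternatively, finiteness follows from lower semicontinuity of the total variation under $L^1$ convergence, namely $|\psi^j|_\bv \le \liminf_{k\to\infty}|\psi^j_k|_\bv$, combined with a uniform bound on the $|\psi^j_k|_\bv$.

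Once $\psi^j\in\bv(\Omega)$ is secured, I would produce the smooth sequence demanded by item c) directly from the classical strict-approximation theorem for $BV$ functions (see \cite[Chapter 5]{EG92}): there exist $g_n\in C^\infty(\Omega)\cap\bv(\Omega)$ with $g_n\to\psi^j$ in $L^1(\Omega)$ and $|g_n|_\bv\to|\psi^j|_\bv$. This sequence witnesses that $\psi^j$ satisfies Definition~\ref{def:quadruple}(c), which finishes the argument. If one prefers to reuse the data of the hypothesis rather than invoke strict approximation as a black box, the alternative is a diagonal construction: each $\psi^j_k$ carries its own smooth sequence $\{\psi^j_{k,n}\}_n$ with $|\psi^j_{k,n}|_\bv\to|\psi^j_k|_\bv$, and one selects an index $n(k)$ so that $\psi^j_{k,n(k)}$ tracks $\psi^j$ in $L^1(\Omega)$ while $|\psi^j_{k,n(k)}|_\bv$ tracks $|\psi^j_k|_\bv$.

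The main obstacle is controlling the total variation of the limit rather than the $\B$-membership, which is routine. Because $|\cdot|_\bv$ is merely lower semicontinuous under $L^1$ convergence, one must either lean on the a priori inclusion $\B\subset\bv(\Omega)$ or first secure a uniform bound $\sup_k|\psi^j_k|_\bv<\infty$, and then upgrade from a lower bound on $|\psi^j|_\bv$ to the genuine convergence of the approximating variations to $|\psi^j|_\bv$. In the diagonal route the same difficulty resurfaces as the requirement that $|\psi^j_k|_\bv\to|\psi^j|_\bv$, so it is precisely the convergence — and not just the semicontinuity — of the variation measures that must be argued with care.
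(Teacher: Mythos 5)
Your proposal is correct and reaches the same conclusion, but by a genuinely different route than the paper. The paper's sketch is a diagonal argument over the hypothesis data: for each $k$ it applies the strict approximation theorem \cite[Theorem 2, p.~172]{EG92} to $\psi^j_k$ to get smooth $\psi^j_{k,l}$ with $\psi^j_{k,l}\to\psi^j_k$ in $L^1(\Omega)$ and $|\psi^j_{k,l}|_\bv\to|\psi^j_k|_\bv$, extracts a diagonal subsequence $\psi^j_{k,l(k)}$ asserted to satisfy $\psi^j_{k,l(k)}\to\psi^j$ in $L^1(\Omega)$ and $|\psi^j_{k,l(k)}|_\bv\to|\psi^j|_\bv$, and only then concludes $\psi^j\in\bv$ from lower semicontinuity. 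You invert the order: you first secure $\psi^j\in\bv(\Omega)$ (either through a.e.\ convergence of a subsequence and the $[m,M]$ bound together with the Remark after Assumption~\ref{ass:1}, or through lower semicontinuity plus a uniform bound on $|\psi^j_k|_\bv$), and then invoke the strict approximation theorem \emph{directly on the limit} $\psi^j$ to manufacture the smooth sequence required by item c) of Definition~\ref{def:quadruple}. Your version is cleaner and in fact patches the soft spot you correctly identify in your last paragraph: the paper's diagonal step implicitly needs $|\psi^j_k|_\bv\to|\psi^j|_\bv$, which does not follow from $L^1$ convergence alone (only the $\liminf$ inequality does), whereas once $\psi^j\in\bv(\Omega)$ is known the strict approximation theorem supplies the witnessing sequence with no further hypothesis. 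The one caveat on your side is that neither $\psi^j_k\in\B$ nor $\sup_k|\psi^j_k|_\bv<\infty$ is literally stated in the lemma's hypotheses, so your two routes to $\psi^j\in\bv(\Omega)$ each import an assumption from the surrounding context; since every invocation of the lemma in the paper provides at least one of these, and since the paper's own sketch silently assumes even more, this is a presentational rather than a substantive gap.
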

\noindent {\it Sketch of the proof.}

Let $k\in \N$ and $j=1,2$. Since $\psi^j_k$ satisfies
Definition~\ref{def:quadruple}, $\psi_k^j \in \bv$. From
\cite[Theorem 2, pg 172]{EG92} there exist sequences
$\{\psi_{k,l}^j\}_{l\in\N}$ in $\bv\times C^\infty(\Omega)$  such
that
$$
\psi_{k,l}^j \stackrel{l\to\infty}{\longrightarrow} \psi_k^j \,\,
\mbox{ in }\, \, L^1(\Omega) \,\, \mbox{ and } \, \,
|\psi_{k,l}^j|_\bv \stackrel{l\to\infty}{\longrightarrow}
|\psi_k^j|_\bv\; .
$$
In particular, for the subsequence $\{\psi_{k,l(k)}^j\}_{k\in\N}$
follows that
\begin{align}\label{eq:limit}
 \psi_{k,l(k)}^j \stackrel{k\to\infty}{\longrightarrow} \psi^j \,\, \mbox{ in }\,
\, L^1(\Omega) \,\, \mbox{ and } \, \, |\psi_{k,l(k)}^j|_\bv
\stackrel{k\to\infty}{\longrightarrow} |\psi^j|_\bv\; .
\end{align}
%%%
Moreover, by assumption $\psi^j \in L^1(\Omega)$. From the lower
semi-continuity of variational measure (see \cite[Theorem 1 pg.
  172]{EG92}), equation~\eqref{eq:limit} and the definition of $\bv$
space, it follows that $\psi^j \in \bv$. \mbox{}\hfill $\square$

In the next lemma we prove that the set of admissible quadruples is
closed with respect the $ L^1(\Omega)  \times  L^2(\Omega) \times
(L^1(\Omega))^2$ topology.

\begin{lemma} \label{lemma:limit-ad}
Let $(z_k,\phi_k,\psi^1_k,\psi^2_k)$ be a sequence of admissible
quadruples converging in $ L^1(\Omega)  \times  L^2(\Omega) \times
(L^1(\Omega))^2$ to some $(z,\phi,\psi^1,\psi^2)$, with $\phi \in
H^1(\Omega)$. Then, $(z,\phi,\psi^1,\psi^2)$ is also an admissible
quadruple.
\end{lemma}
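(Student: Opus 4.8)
The plan is to exhibit, for the limit quadruple $(z,\phi,\psi^1,\psi^2)$, the diagonal sequences required by items a)--c) of Definition~\ref{def:quadruple}. Since each $(z_k,\phi_k,\psi^1_k,\psi^2_k)$ is admissible, I have at my disposal, for every fixed $k$, an $H^1(\Omega)$-sequence $\{\phi_{k,l}\}_l$ with $\|\phi_{k,l}-\phi_k\|_{L^2(\Omega)}\to 0$ as $l\to\infty$, a null sequence $\{\ve_{k,l}\}_l$ with $\|H_{\ve_{k,l}}(\phi_{k,l})-z_k\|_{L^1(\Omega)}\to 0$, and $C^\infty$-sequences $\{\psi^j_{k,l}\}_l$ in $\bv\cap C^\infty(\Omega)$ with $|\psi^j_{k,l}|_\bv\to|\psi^j_k|_\bv$. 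The overall strategy is a diagonal extraction: for each $k$ choose an index $l(k)$ large enough that all three defining quantities are within $1/k$ of their $k$-indexed targets, and then show that the single-indexed sequences $\{\phi_{k,l(k)}\}_k$, $\{\ve_{k,l(k)}\}_k$, $\{\psi^j_{k,l(k)}\}_k$ certify admissibility of the limit.

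The verification of a) and b) proceeds by triangle inequalities combined with the convergence of the quadruples themselves. For a), I estimate $\|\phi_{k,l(k)}-\phi\|_{L^2(\Omega)}\le\|\phi_{k,l(k)}-\phi_k\|_{L^2(\Omega)}+\|\phi_k-\phi\|_{L^2(\Omega)}$, where the first term is $\le 1/k$ by the choice of $l(k)$ and the second tends to zero because the quadruples converge in the $L^2(\Omega)$-component. For b), with $\ve_k:=\ve_{k,l(k)}$, I write $\|H_{\ve_k}(\phi_{k,l(k)})-z\|_{L^1(\Omega)}\le\|H_{\ve_{k,l(k)}}(\phi_{k,l(k)})-z_k\|_{L^1(\Omega)}+\|z_k-z\|_{L^1(\Omega)}$, again controlling the first term by $1/k$ and the second by the $L^1(\Omega)$-convergence of the $z_k$; I should also record that $\ve_k\to 0$, which follows since $\ve_{k,l(k)}\le 1/k$ can be arranged. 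For c), Lemma~\ref{lemma:auxil} already guarantees that the $L^1(\Omega)$-limit $\psi^j$ lies in $\bv(\Omega)$, and its proof constructs exactly the diagonal $C^\infty$-sequence $\{\psi^j_{k,l(k)}\}_k$ with $|\psi^j_{k,l(k)}|_\bv\to|\psi^j|_\bv$; I would invoke it directly.

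The main obstacle is the simultaneous nature of the diagonalization: the index $l(k)$ must be chosen large enough to control \emph{all three} families of estimates at once (the $L^2$-closeness of $\phi_{k,l(k)}$, the $L^1$-closeness of $H_{\ve_{k,l(k)}}(\phi_{k,l(k)})$, the $\bv$-closeness of $|\psi^j_{k,l(k)}|_\bv$, plus smallness of $\ve_{k,l(k)}$), so I must confirm these can be met by a single threshold rather than conflicting ones. A subtler point is that item b) couples $\ve_{k,l}$ with $\phi_{k,l}$ through the nonlinear map $H_{\ve_{k,l}}$, so I cannot choose the null sequence and the $H^1$-approximants independently; I would use the estimate $\|H_\ve(\phi_{k,l})-z_k\|_{L^1}\le\|H_\ve(\phi_{k,l})-H_\ve(\phi_k)\|_{L^1}+\|H_\ve(\phi_k)-z_k\|_{L^1}$ together with the Lipschitz bound $\|H_\ve(\phi_{k,l})-H_\ve(\phi_k)\|_{L^1(\Omega)}\le\ve^{-1}\sqrt{{\rm meas}(\Omega)}\,\|\phi_{k,l}-\phi_k\|_{L^2(\Omega)}$ from Lemma~\ref{lemma:q-cont}(iii) to keep the coupling under control. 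Once the diagonal indices are fixed so that every discrepancy is $O(1/k)$, the three convergences assembling a), b), c) follow, and $(z,\phi,\psi^1,\psi^2)$ is admissible.
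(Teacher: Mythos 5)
Your diagonal-extraction strategy is precisely the one the paper follows (the paper calls the selection function $\tau(k)$ and additionally arranges $\ve_k^{\tau(k)}\leq\frac12\ve_{k-1}^{\tau(k-1)}$ to force the null sequence to zero, which is the same device as your requirement $\ve_{k,l(k)}\leq 1/k$), and your handling of items a)--c) via triangle inequalities and Lemma~\ref{lemma:auxil} matches the paper's sketch. Your worry about the coupling between $\ve_{k,l}$ and $\phi_{k,l}$ is legitimate but resolves itself more simply than you suggest: admissibility of the $k$-th quadruple already supplies a \emph{joint} pair of sequences with $\|H_{\ve_{k,l}}(\phi_{k,l})-z_k\|_{L^1(\Omega)}\to 0$, so you diagonalize over that single index $l$ without needing the Lipschitz bound from Lemma~\ref{lemma:q-cont}(iii).

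There is, however, one genuine omission. Definition~\ref{def:quadruple} requires an admissible quadruple to lie in $L^\infty(\Omega)\times H^1(\Omega)\times\bv(\Omega)^2$ before items a)--c) are even consulted. The hypothesis gives you $\phi\in H^1(\Omega)$, and Lemma~\ref{lemma:auxil} gives $\psi^j\in\bv(\Omega)$, but $z$ is only known as an $L^1(\Omega)$-limit of the $z_k$, which a priori places it in $L^1(\Omega)$, not $L^\infty(\Omega)$. The paper closes this gap by contradiction: if there were $\Omega'\subset\Omega$ with $|\Omega'|>0$ and $\gamma>0$ such that $z(x)>1+\gamma$ on $\Omega'$ (the case $z(x)<-\gamma$ is symmetric), then, since $H_{\ve_k^{\tau(k)}}(\phi_{k,\tau(k)})$ takes values in $[0,1]$ a.e., one would have $\|z-H_{\ve_k^{\tau(k)}}(\phi_{k,\tau(k)})\|_{L^1(\Omega)}\geq\gamma\,|\Omega'|$ for every $k$, contradicting exactly the convergence you establish in item b). You need to add this short argument; without it the conclusion that the limit is admissible does not follow.
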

\noindent {\it Sketch of the proof.}
Let $k\in \N$. Since $(z^1_k,\phi^1_k,\psi^1_k,\psi^2_k)$ is an
admissible quadruple, it follows from Definition~\ref{def:quadruple}
that there exist sequences $\{\phi_{k,l}\}_{l\in\N}$, in
$H^1(\Omega)$, $\{\psi_{k,l}^1\}_{l\in\N}$,
$\{\psi_{k,l}^2\}_{l\in\N}$ in $\bv\times C^\infty(\Omega)$
 and a correspondent sequence $\{\ve^l_{k}\}_{l\in\N}$ converging to zero such that
$$
\phi_{k,l} \stackrel{l\to\infty}{\longrightarrow} \phi_k \ \mbox{ in
} \ L^2(\Omega)\,, \quad H_{\ve^l_{k}}(\phi_{k,l})
\stackrel{l\to\infty}{\longrightarrow} z_k \ \mbox{ in } \
L^1(\Omega)\, \quad \mbox{ and } \quad |\psi_{k,l}^j|_\bv
\stackrel{l\to\infty}{\longrightarrow} |\psi_k^j|_\bv\,, j=1,2\, .
$$

 Define the monotone increasing function $\tau : \N \to \N$ such
that, for every $k \in \N$ it holds
\begin{equation} \label{eq:diag-argument}
\ve_k^{\tau(k)} \leq \frac{1}{2} \ve_{k-1}^{\tau(k-1)} \, , \ \big\|
\phi_{k,\tau(k)} - \phi_k \big\|_{ L^2(\Omega)} \leq \frac{1}{k} \,
, \ \big\| H_{\ve_k^{\tau(k)}}(\phi_{k,\tau(k)}) - z_k
\big\|_{L^1(\Omega)} \leq \frac{1}{k} \, , \,
|\psi_{k,\tau(k)}^j|_\bv \longrightarrow |\psi_k^j|_\bv\,, j=1,2\, .
\end{equation}
Hence, for each $k\in \N$
\begin{eqnarray*}
\big\| \phi - \phi_{k,\tau(k)} \big\|_{ L^2(\Omega)} & \leq &
    \| \phi - \phi_k \|_{L^2(\Omega)}
   + \big\| \phi_{k,\tau(k)}- \phi_k  \big\|_{L^2(\Omega)} \\
\big\| z - H_{\ve_k^{\tau(k)}}(\phi_{k,\tau(k)})
\big\|_{L^1(\Omega)} & \leq &
    \| z - z_k\|_{L^1(\Omega)}
   + \big\| H_{\ve_k^{\tau(k)}}(\phi_{k,\tau(k)}) - z_k
   \big\|_{L^1(\Omega)}\,.
\end{eqnarray*}
From \eqref{eq:diag-argument},
\begin{equation} \label{eq:zj-conv}
\lim_{k\to\infty} \big\| \phi - \phi_{k,\tau(k)} \big\|_{
L^2(\Omega)} = 0 \,\, , \,\,\lim_{k\to\infty} \big\| z -
H_{\ve_k^{\tau(k)}}(\phi_{k,\tau(k)})
                \big\|_{L^1(\Omega)} = 0\,.
\end{equation}
Moreover, with the same arguments as Lemma~\ref{lemma:auxil},
it follows that
$$|\psi_{k,\tau(k)}^j|_\bv \to |\psi^j|_\bv\,,\qquad j=1,2\,,$$
and $\psi^j\in \bv(\Omega)$.  Therefore, it remains to prove that
$(z,\phi,\psi^1,\psi^2)$ is an admissible quadruple. From
Definition~\ref{def:quadruple} and Lemma~\ref{lemma:auxil}, it is
enough to prove that $z \in L^\infty(\Omega)$.  If this is not the
case, there would exist a $\Omega' \subset \Omega$ with $|\Omega'| >
0$ and $\gamma > 0$ such that $z(x) > 1 + \gamma$ in $\Omega'$ (the
other case: $z(x) < - \gamma$ is analogous). Since
$(H_{\ve_k^{\tau(k)}}(\phi_{k,\tau(k)}))(x) \in [0,1]$ a.e. in
$\Omega$ for $k \in \N$ (see remark after
Definition~\ref{def:quadruple}), we would have
$$
\| z - H_{\ve_k^{\tau(k)}}(\phi_{k,\tau(k)}) \|_{L^1(\Omega)} \ge \|
z - H_{\ve_k^{\tau(k)}}(\phi_{k,\tau(k)}) \|_{L^1(\Omega')} \ge
   \gamma |\Omega'| \, , \ k \in \N \, ,
$$
contradicting the second limit in \eqref{eq:zj-conv}. \mbox{} \hfill
$\square$

%--------------------------------------%
\subsection{Relevant Properties of the Penalization
Functional}\label{subsec:relevant-propoerties-penalization}

In next lemmas, we verify properties of the functional $R$ which are
fundamental for the convergence analysis outlined in
Section~\ref{sec:conv-an}. In particular, these properties implies
that the level sets of $\Ga$ are compact in the set of admissible
quadruple, i.e., $\Ga$ assume a minimizer on this set. First, we prove
a lemma that simplify the functional $R$ in \eqref{def:R}. Here we
present the sketch of the proof. For more details, see the arguments
in \cite[Lemma 3]{DCLT09}.

\begin{lemma}\label{lemma:R-auxl}
Let $(z,\phi,\psi^1,\psi^2)$ be an admissible quadruple. Then, there
exists sequences $\{\ve_k\}_{k\in \N}$,  $\{\phi_k\}_{k\in \N}$ and
$\{\psi^j_k\}_{k\in \N}$ as in the Definition~\ref{def:quadruple},
such that
\begin{align}
R(z,\phi,\psi^1,\psi^2) = \lim_{k \to \infty} \left\{\beta_1
|H_{\ve_k}(\phi_k)|_\bv + \beta_2\norm{\phi_k -
\phi_0}^2_{H^1(\Omega)} + \beta_3
\sum_{j=1}^2|\psi^j_k-\psi^j_0|_\bv \right\}\,.
\end{align}
\end{lemma}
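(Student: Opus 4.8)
The plan is to exploit the additive structure $R(z,\phi,\psi^1,\psi^2)=\rho(z,\phi)+\beta_3\sum_{j=1}^2|\psi^j-\psi^j_0|_\bv$ and to treat its two groups of terms separately. The level-set part $\rho(z,\phi)$ enters $R$ through an infimum of $\liminf$'s, whereas the contrast part is a plain value; so it suffices to exhibit one admissible characterization of $(z,\phi,\psi^1,\psi^2)$ along which $\beta_1|H_{\ve_k}(\phi_k)|_\bv+\beta_2\|\phi_k-\phi_0\|^2_{H^1(\Omega)}$ converges to $\rho(z,\phi)$ while simultaneously $\beta_3\sum_{j}|\psi^j_k-\psi^j_0|_\bv$ converges to $\beta_3\sum_{j}|\psi^j-\psi^j_0|_\bv$.

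First I would realize the infimum defining $\rho$ as a genuine limit. By the definition of $\rho(z,\phi)$, for each $n\in\N$ there is a pair $\{\ve^{(n)}_k\}_k,\{\phi^{(n)}_k\}_k$ characterizing the quadruple as admissible with $\liminf_k[\beta_1|H_{\ve^{(n)}_k}(\phi^{(n)}_k)|_\bv+\beta_2\|\phi^{(n)}_k-\phi_0\|^2_{H^1(\Omega)}]\le\rho(z,\phi)+1/n$. Passing to a subsequence in $k$ I may assume this $\liminf$ is an actual limit $L_n$, while the admissibility limits $\phi^{(n)}_k\to\phi$ in $L^2(\Omega)$, $H_{\ve^{(n)}_k}(\phi^{(n)}_k)\to z$ in $L^1(\Omega)$ and $\ve^{(n)}_k\to0$ survive on any subsequence; since $L_n\ge\rho(z,\phi)$ always, $L_n\to\rho(z,\phi)$. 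A Cantor diagonal selection out of this doubly-indexed family, carried out exactly through the monotone rate control \eqref{eq:diag-argument} of Lemma~\ref{lemma:limit-ad} (forcing $\ve_k\to0$, $\|\phi_k-\phi\|_{L^2(\Omega)}\le1/k$ and $\|H_{\ve_k}(\phi_k)-z\|_{L^1(\Omega)}\le1/k$), then produces a single admissible sequence $(\ve_k,\phi_k)$ along which $\beta_1|H_{\ve_k}(\phi_k)|_\bv+\beta_2\|\phi_k-\phi_0\|^2_{H^1(\Omega)}\to\rho(z,\phi)$.

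Next I would build the contrast sequences. By Assumption~\ref{ass:1}, $\psi^j,\psi^j_0\in\B\subset\bv(\Omega)$, so $\psi^j-\psi^j_0\in\bv(\Omega)$, and the smooth approximation theorem \cite[Theorem 2, pg 172]{EG92} applied to the \emph{difference} yields $g^j_k\in\bv\cap C^\infty(\Omega)$ with $g^j_k\to\psi^j-\psi^j_0$ in $L^1(\Omega)$ and $|g^j_k|_\bv\to|\psi^j-\psi^j_0|_\bv$. Setting $\psi^j_k:=g^j_k+\psi^j_0$ (which is smooth once the a priori guess $\psi^j_0$ is taken smooth) gives at once $|\psi^j_k-\psi^j_0|_\bv=|g^j_k|_\bv\to|\psi^j-\psi^j_0|_\bv$; moreover, since the fixed absolutely continuous term $\nabla\psi^j_0\,dx$ added to the strictly converging measures $Dg^j_k$ does not destroy strictness, one also gets $|\psi^j_k|_\bv\to|\psi^j|_\bv$, so $\{\psi^j_k\}$ meets item c) of Definition~\ref{def:quadruple}. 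Reindexing the two constructions onto a common index, $(\ve_k,\phi_k,\psi^1_k,\psi^2_k)$ is then an admissible characterization along which the bracketed functional tends to $\rho(z,\phi)+\beta_3\sum_j|\psi^j-\psi^j_0|_\bv=R(z,\phi,\psi^1,\psi^2)$, as claimed.

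The hard part will be the diagonalization of the first step: $\rho$ is an infimum of $\liminf$'s over competing characterizations, and the real work is upgrading this to a single sequence along which the quantity actually converges while the three admissibility limits are preserved in tandem; the monotone estimates \eqref{eq:diag-argument} are precisely what make the selection self-consistent. A second, genuinely new difficulty, absent from the constant-contrast analysis \cite[Lemma 3]{DCLT09}, is that strict convergence $\psi^j_k\to\psi^j$ by itself does \emph{not} force $|\psi^j_k-\psi^j_0|_\bv\to|\psi^j-\psi^j_0|_\bv$ once the jump sets of $\psi^j$ and $\psi^j_0$ interact, which is exactly why the smooth approximants must be constructed from $\psi^j-\psi^j_0$ rather than from $\psi^j$ alone.
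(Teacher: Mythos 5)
Your handling of the level-set part is essentially the paper's own argument: the definition of $\rho(z,\phi)$ is first realized as a limit over near-optimal characterizing sequences and then a single sequence is extracted by the diagonal/rate-control device of \eqref{eq:diag-argument}, exactly as in Lemma~\ref{lemma:limit-ad}; that portion is correct. You are also right that the contrast term conceals a difficulty absent from the constant-level case \cite[Lemma 3]{DCLT09}, namely that an item-c) sequence with $|\psi^j_k|_\bv\to|\psi^j|_\bv$ need not satisfy $|\psi^j_k-\psi^j_0|_\bv\to|\psi^j-\psi^j_0|_\bv$. The gap is in your repair of this. First, $\psi^j_k:=g^j_k+\psi^j_0$ belongs to $C^\infty(\Omega)$, as item c) of Definition~\ref{def:quadruple} requires, only if $\psi^j_0$ is smooth, which is nowhere assumed ($\psi^j_0\in\B$ only). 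Second, and more seriously, the principle you invoke to recover $|\psi^j_k|_\bv\to|\psi^j|_\bv$ --- that strict convergence of $g^j_k$ survives the addition of a fixed function with absolutely continuous derivative --- is false in general.

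Concretely, on $\Omega=(0,1)$ take $\psi^j_0(x)=-x$, $\psi^j=0$, so that $\psi^j-\psi^j_0=x$. Let $E_k=\bigcup_{i=0}^{k-1}\big(i/k,\,i/k+1/(2k)\big)$ and let $g_k$ be (a smooth regularization of) the primitive of $2\chi_{E_k}$. Then $g_k(x)=2\,|E_k\cap(0,x)|\to x$ uniformly and $|g_k|_\bv=\int_0^1 2\chi_{E_k}\,dx=1=|x|_\bv$, so $g_k\to\psi^j-\psi^j_0$ strictly in $\bv$ and is an admissible output of \cite[Theorem 2, p.~172]{EG92} as far as your argument uses it; yet $|g_k+\psi^j_0|_\bv=\int_0^1|2\chi_{E_k}-1|\,dx=1$, whereas $|\psi^j|_\bv=0$. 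Thus strict convergence of the difference alone does not yield item c) for $\psi^j_k=g^j_k+\psi^j_0$: you have merely transferred the defect you diagnosed from the term $|\psi^j_k-\psi^j_0|_\bv$ to the term $|\psi^j_k|_\bv$. To close this you must exploit the specific structure of the approximants rather than strict convergence as an abstract hypothesis: taking $\psi^j_k$ to be the partition-of-unity mollification of $\psi^j$ from \cite[Theorem 2, p.~172]{EG92}, the same proof applied to $\psi^j$ and to $\psi^j-\psi^j_0$ (using linearity of mollification and $\nabla(\eta_\ve*\psi^j_0)\to\nabla\psi^j_0$ when $\psi^j_0$ is regular, or a joint Reshetnyak-type continuity argument) gives both $|\psi^j_k|_\bv\to|\psi^j|_\bv$ and $|\psi^j_k-\psi^j_0|_\bv\to|\psi^j-\psi^j_0|_\bv$ simultaneously; some such additional input is genuinely needed here.
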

\noindent {\it Sketch of the proof.} For each $l \in \N$, the
definition of $R$ (see Definition~\ref{def:quadruple}) guaranties
the existence of sequences $\ve_k^l$, $\{\phi_{k,l}^j\}\in
H^1(\Omega)$ and $\{\psi_{k,l}^j\}\in \B$ such that
\begin{align*}
R(z,\phi,\psi^1,\psi^2) = \lim_{l\to \infty}\left\{ \liminf_{k\to
\infty}\left\{ \beta_1 |H_{\ve_k^l}(\phi_{k,l})|_\bv +
\beta_2\norm{\phi_{k,l} - \phi_0}^2_{H^1(\Omega)} \right\} +\beta_3
\sum_{j=1}^2|\psi^j_{k,l}-\psi^j_0|_\bv \right\}\,.
\end{align*}
Now a similar extraction of subsequences as in
Lemma~\ref{lemma:limit-ad} complete the proof. \mbox{} \hfill
$\square$

In the following, we prove two lemmas that are essential to the
proof of  well posedness of the Tikhonov functional \eqref{eq:mc}.

\begin{lemma} \label{lemma:rho-coer}
The functional $R$ in \eqref{def:R} is coercive on the set of
admissible quadruples. In other words, given any admissible
quadruple $(z, \phi, \psi^1, \psi^2)$ we have
\begin{align*}
R(z, \phi, \psi^1, \psi^2) \geq \left(\beta_1|z|_{\bv}
+\beta_2\norm{\phi - \phi_0}^2_{H^1(\Omega)} +\beta_2 \sum_{j=1}^2
|\psi^j - \psi_0^j|_{\bv}\right)\,.
\end{align*}
\end{lemma}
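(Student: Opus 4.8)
The plan is to reduce the claim to a lower bound for the functional $\rho$ alone and then establish that bound termwise via lower semicontinuity. Recall from \eqref{def:R} that
$$R(z,\phi,\psi^1,\psi^2) = \rho(z,\phi) + \beta_3\sum_{j=1}^2|\psi^j - \psi^j_0|_\bv,$$
so the summand involving $\psi^1,\psi^2$ already appears explicitly in $R$ and contributes exactly $\beta_3\sum_{j=1}^2|\psi^j-\psi^j_0|_\bv$ (the $\beta_2$ in the statement being a typographical slip for $\beta_3$). Consequently it suffices to prove
$$\rho(z,\phi) \geq \beta_1|z|_\bv + \beta_2\norm{\phi - \phi_0}^2_{H^1(\Omega)}.$$

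To this end I would fix any pair of sequences $\{\ve_k\}$, $\{\phi_k\}$ characterizing $(z,\phi,\psi^1,\psi^2)$ as an admissible quadruple, so that $\phi_k \to \phi$ in $L^2(\Omega)$ and $H_{\ve_k}(\phi_k) \to z$ in $L^1(\Omega)$, and show
$$\liminf_{k\to\infty}\Big[\beta_1|H_{\ve_k}(\phi_k)|_\bv + \beta_2\norm{\phi_k - \phi_0}^2_{H^1(\Omega)}\Big] \geq \beta_1|z|_\bv + \beta_2\norm{\phi - \phi_0}^2_{H^1(\Omega)}.$$
Since the right-hand side does not depend on the chosen sequences, passing to the infimum over all admissible sequences then yields the stated bound on $\rho$. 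If the liminf is $+\infty$ there is nothing to prove, so I assume it finite and pass to a subsequence (not relabeled) along which the bracketed quantity converges to that value; along this subsequence both nonnegative terms are bounded, and in particular $\{\phi_k\}$ is bounded in $H^1(\Omega)$.

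The two terms are then treated separately. For the $\bv$ term, lower semicontinuity of the variational measure with respect to $L^1(\Omega)$-convergence (see \cite[Theorem 1 pg. 172]{EG92}) together with $H_{\ve_k}(\phi_k)\to z$ in $L^1(\Omega)$ gives $\liminf_k|H_{\ve_k}(\phi_k)|_\bv \geq |z|_\bv$. For the $H^1$ term, the $H^1(\Omega)$-boundedness of $\{\phi_k\}$ allows me to extract a further subsequence converging weakly in $H^1(\Omega)$; since $\phi_k\to\phi$ already in $L^2(\Omega)$, uniqueness of limits forces the weak $H^1$-limit to equal $\phi$, so $\phi_k\rightharpoonup\phi$ in $H^1(\Omega)$, and weak lower semicontinuity of the Hilbert-space norm gives $\liminf_k\norm{\phi_k-\phi_0}^2_{H^1(\Omega)} \geq \norm{\phi-\phi_0}^2_{H^1(\Omega)}$.

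The main delicacy is the joint treatment of the two summands inside one liminf, since in general $\liminf(a_k+b_k)\neq\liminf a_k+\liminf b_k$. I circumvent this by first choosing the subsequence realizing the limit of the sum and then, using that both nonnegative terms are bounded, passing to a common further subsequence along which $\beta_1|H_{\ve_k}(\phi_k)|_\bv$ and $\beta_2\norm{\phi_k-\phi_0}^2_{H^1(\Omega)}$ converge individually to limits $A$ and $B$ with $A+B$ equal to the limit of the bracket; the two lower-semicontinuity estimates (which are inherited by subsequences, as the underlying $L^1$ and weak $H^1$ convergences persist) give $A\geq\beta_1|z|_\bv$ and $B\geq\beta_2\norm{\phi-\phi_0}^2_{H^1(\Omega)}$, and summing yields the required inequality. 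The only other point needing care is the identification of the weak $H^1$-limit with the strong $L^2$-limit $\phi$, which is immediate once both convergences are available.
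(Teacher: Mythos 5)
Your proof is correct and follows the same decomposition as the paper's: both reduce the claim to the inequality $\rho(z,\phi)\ \ge\ \beta_1|z|_\bv+\beta_2\norm{\phi-\phi_0}^2_{H^1(\Omega)}$ and then observe that the $\beta_3$-term of $R$ in \eqref{def:R} passes through unchanged (you are also right that the $\beta_2$ in front of the last sum in the statement is a slip for $\beta_3$). The only substantive difference is that the paper obtains the $\rho$-inequality by citing \cite[Lemma~4]{CLT08}, whereas you prove it from scratch: fixing sequences $\{\ve_k\},\{\phi_k\}$ characterizing the admissible quadruple, passing to a subsequence realizing the liminf of the bracket and then to a common further subsequence along which each nonnegative summand converges individually, and applying $L^1$-lower semicontinuity of the variation measure to the first term and weak lower semicontinuity of the $H^1$-norm (after identifying the weak $H^1$-limit with the strong $L^2$-limit $\phi$) to the second. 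This is precisely the content of the cited lemma, so your argument buys a self-contained proof at the cost of a little length; in particular your device for handling the liminf of a sum --- a common subsequence on which both terms converge, so that superadditivity of $\liminf$ is never needed --- is the standard and correct way to make the termwise estimate legitimate, and taking the infimum over all characterizing sequences at the end correctly transfers the bound to $\rho$ as defined in \eqref{eq:rho}.
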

\noindent {\it Sketch of the proof.} Let $(z,\phi,\psi^1,\psi^2)$ be
an admissible quadruple. From \cite[Lemma~4]{CLT08}, it  follows
that
\begin{align} \label{eq:rho-coercivo}
\rho(z,\phi) \ \geq \
\big( \beta_1 |z|_\bv + \beta_2 \|\phi-\phi_0\|_{H^1(\Omega)}^2 \big) \, .
\end{align}
Now, from \eqref{eq:rho-coercivo} and the definition of $R$ in
\eqref{def:R}, we have
$$
\big( \beta_1 |z|_\bv + \beta_2 \| \phi - \phi_0 \|^2_{H^1(\Omega)}
         + \beta_3 \sum_{j=1}^2|\psi^j-\psi_0^j|_\bv\big)
\, \leq \, \rho(z,\phi) + \beta_3 \sum_{j=1}^2|\psi^j-\psi_0^j|_\bv
\, = \, R(z,\phi,\psi^1,\psi^2) \, ,
$$
concluding the proof. \mbox{} \hfill $\square$

\begin{lemma} \label{lemma:rho-lsc}
The functional $R$ in \eqref{def:R} is weak lower semi-continuous on
the set of admissible quadruples, i.e. given a sequence
$\{(z_k,\phi_k,\psi^1_k,\psi^2_k)\}$ of admissible quadruples such
that $z_k \to z$ in $L^1(\Omega)$, $\phi_k \rightharpoonup \phi$ in
$H^1(\Omega)$, $\psi^j_k \to \psi^j$ in $L^1(\Omega)$, for some
admissible quadruple $(z,\phi,\psi^1,\psi^2)$, then
$$
R(z,\phi,\psi^1,\psi^2) \ \leq \ \liminf_{k \in \N}
R(z_k,\phi_k,\psi^1_k,\psi^2_k) \, .
$$
\end{lemma}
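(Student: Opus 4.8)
The plan is to split the functional $R$ from \eqref{def:R} into its two constituents, the functional $\rho$ from \eqref{eq:rho} and the level-value term $\beta_3\sum_{j=1}^2|\psi^j-\psi^j_0|_\bv$, to establish weak lower semicontinuity of each separately, and then to recombine them using the superadditivity of $\liminf$, namely $\liminf_k(a_k+b_k+c_k)\ge \liminf_k a_k+\liminf_k b_k+\liminf_k c_k$. The level-value term is the routine part: since $\psi^j_k\to\psi^j$ in $L^1(\Omega)$ and $\psi^j_0$ is fixed, we have $\psi^j_k-\psi^j_0\to\psi^j-\psi^j_0$ in $L^1(\Omega)$, and the lower semicontinuity of the variational measure with respect to $L^1$-convergence (\cite[Theorem 1, pg. 172]{EG92}) yields $|\psi^j-\psi^j_0|_\bv\le\liminf_k|\psi^j_k-\psi^j_0|_\bv$ for $j=1,2$.

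The substantial step is to prove $\rho(z,\phi)\le\liminf_k\rho(z_k,\phi_k)$. First I would upgrade the convergence of the level-set functions: a weakly convergent sequence $\phi_k\rightharpoonup\phi$ is bounded in $H^1(\Omega)$, so by the compact embedding of $H^1(\Omega)$ into $L^2(\Omega)$ (valid under \textbf{(A1)}) every subsequence has a further $L^2$-convergent subsequence, whose limit must be $\phi$ by uniqueness of weak limits; hence $\phi_k\to\phi$ strongly in $L^2(\Omega)$. This strong $L^2$-convergence is precisely what makes Definition~\ref{def:quadruple}(a) accessible for the diagonal sequence constructed below, and is the reason the compact embedding is essential. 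Passing to a subsequence (not relabeled) so that $\rho(z_k,\phi_k)\to\liminf_k\rho(z_k,\phi_k)=:L$, which we may assume finite, I would use the definition of $\rho$ in \eqref{eq:rho} (together with the extraction argument of Lemma~\ref{lemma:R-auxl}) to obtain, for each $k$, an admissible sequence $\{(\ve^k_m,\phi^k_m)\}_m$ for $(z_k,\phi_k)$ with $\phi^k_m\to\phi_k$ in $L^2(\Omega)$, $H_{\ve^k_m}(\phi^k_m)\to z_k$ in $L^1(\Omega)$ as $m\to\infty$, and whose energy $E^k_m:=\beta_1|H_{\ve^k_m}(\phi^k_m)|_\bv+\beta_2\|\phi^k_m-\phi_0\|^2_{H^1(\Omega)}$ satisfies $\liminf_m E^k_m\le\rho(z_k,\phi_k)+1/k$.

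The heart of the argument is then a diagonal selection exactly in the spirit of Lemma~\ref{lemma:limit-ad}: for each $k$ pick an index $m(k)$, writing $\tilde\phi_k:=\phi^k_{m(k)}$ and $\tilde\ve_k:=\ve^k_{m(k)}$, so large that $\tilde\ve_k\le 1/k$, $\|\tilde\phi_k-\phi_k\|_{L^2(\Omega)}\le 1/k$, $\|H_{\tilde\ve_k}(\tilde\phi_k)-z_k\|_{L^1(\Omega)}\le 1/k$, and $E^k_{m(k)}\le\rho(z_k,\phi_k)+2/k$. Combining these bounds with $\phi_k\to\phi$ in $L^2(\Omega)$ and $z_k\to z$ in $L^1(\Omega)$ gives $\tilde\ve_k\to 0$, $\tilde\phi_k\to\phi$ in $L^2(\Omega)$ and $H_{\tilde\ve_k}(\tilde\phi_k)\to z$ in $L^1(\Omega)$, so $\{\tilde\phi_k\}$ and $\{\tilde\ve_k\}$ form an admissible pair of sequences for $(z,\phi)$ in the sense of Definition~\ref{def:quadruple}. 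Since $\rho(z,\phi)$ is the infimum over all such sequences, I obtain $\rho(z,\phi)\le\liminf_k E^k_{m(k)}\le\liminf_k(\rho(z_k,\phi_k)+2/k)=L$. Adding the three lower-semicontinuity inequalities and applying superadditivity of $\liminf$ then delivers $R(z,\phi,\psi^1,\psi^2)\le\liminf_k R(z_k,\phi_k,\psi^1_k,\psi^2_k)$.

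I expect the main obstacle to be the diagonal construction for $\rho$: one must simultaneously drive the defining $\ve$-sequence to zero, keep the $L^2$- and $L^1$-convergences intact, and preserve near-optimality of the energy, all while transferring admissibility from each $(z_k,\phi_k)$ to the limit $(z,\phi)$. The conversion of weak $H^1$-convergence into strong $L^2$-convergence is the mechanism that couples the ambient convergence to the admissibility condition in Definition~\ref{def:quadruple}(a), and controlling this interplay is the only genuinely delicate point; the remaining estimates are routine once the realizing sequences are chosen.
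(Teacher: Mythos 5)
Your proof is correct, and its skeleton (split $R$ into $\rho$ plus the level-value terms, prove lower semicontinuity of each, recombine by superadditivity of $\liminf$) matches the paper's. The difference lies in where the work is done. For the hard part, the lower semicontinuity of $\rho$, the paper simply cites \cite[Lemma 5]{CLT08}, whereas you supply a complete argument: upgrading $\phi_k \rightharpoonup \phi$ in $H^1(\Omega)$ to strong $L^2(\Omega)$ convergence via the compact embedding, extracting near-optimal realizing sequences for each $(z_k,\phi_k)$, and diagonalizing so that the selected sequence $\{(\tilde\ve_k,\tilde\phi_k)\}$ is itself admissible for $(z,\phi)$ and its energy is eventually below $\rho(z_k,\phi_k)+2/k$. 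This is a genuine self-contained replacement for the citation, and the selection of $m(k)$ (infinitely many indices satisfy the energy bound since $\liminf_m E^k_m \le \rho(z_k,\phi_k)+1/k$, and all large indices satisfy the three smallness conditions, so the intersection is nonempty) is sound. Conversely, for the $\bv$ term the paper re-derives lower semicontinuity by hand, passing through smooth approximants $\psi^j_{k,l}$ and testing against vector fields $\xi\in C^1_c(\Omega,\mathbb{R}^n)$, while you invoke the standard lower semicontinuity of the variational measure under $L^1$ convergence directly; your version is shorter and, incidentally, correctly applies the estimate to the shifted functions $\psi^j_k-\psi^j_0$ appearing in $R$, a detail the paper's displayed computation glosses over by working with $|\psi^j_k|_\bv$ instead. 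Your approach buys a proof that does not depend on an external lemma; the paper's buys brevity at the cost of deferring the essential diagonal construction elsewhere.
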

\begin{proof}
The functional $\rho(z,\phi)$ is weak lower semi-continuous cf.
\cite[Lemma 5]{CLT08}. As $\psi_k^j \in \bv$ follows from
\cite[Theorem 2 pg 172]{EG92} that there exist sequences
$\{\psi_{k,l}^j\} \in \bv \cap C^\infty(\Omega)$ such that
$\norm{\psi_{k,l}^j -  \psi_k^j}_{L^1(\Omega)} \leq \frac{1}{l}$.
From a diagonal argument, we can extract a subsequence
$\{\psi_{k,l(k)}^j\}$ of $\{\psi_{k,l}^j\}$ such that
$\{\psi_{k,l(k)}^j\} \to \psi^j$ in $L^1(\Omega)$ as $k \to \infty$.
Let $\xi \in C^1_c(\Omega, \mathbb{R}^n)\,,\,|\xi|\leq 1$. Then,
from \cite[Theorem 1 pg 167]{EG92}, it follows that
\begin{align*}
\int_\Omega \psi^j\; \nabla \cdot \xi dx & = \lim_{k\to
\infty}\int_\Omega
                                    \psi_{k,l(k)}^j \; \nabla \cdot \xi dx =
                                    \lim_{k\to \infty}\left[\int_\Omega \left(\psi_{k,l(k)}^j -
                                    \psi_k^j\right) \; \nabla \cdot \xi dx + \int_\Omega \psi_k^j \; \nabla \cdot \xi dx
                                    \right]\\
                                  &
                                  \leq
                               \lim_{k\to \infty}\left[\norm{\psi_{k,l(k)}^j -
                               \psi_k^j}_{L^1(\Omega)} \norm{\nabla \cdot \xi}_{L^\infty(\Omega)}|\Omega| -
                               \int_\Omega \xi\cdot \sigma_k d|\psi_k^j|_\bv  \right]
                           \leq
                                \liminf_{k\to \infty} |\psi_k^j|_\bv\,.
\end{align*}
Thus, form the definition of $|\cdot|_\bv$ (see \cite{EG92}), we
have
\begin{align*}
|\psi^j|_\bv = \sup\left\{ \int_\Omega \psi^j\; \nabla \cdot \xi
dx\,; \xi \in C^1_c(\Omega, \mathbb{R}^n)\,,\,|\xi|\leq 1 \right\}
\leq \liminf_{k\to \infty} |\psi_k^j|_\bv\,.
\end{align*}
Now, the lemma follows from the fact that the functional $R$ in
\eqref{def:R} is a linear combination of lower semi-continuous
functionals.
\end{proof}

%----------------------------------------------------------------------------%
\section{Convergence Analysis} \label{sec:conv-an}

In the following, we consider any positive parameter $\alpha$,
$\beta_j\,, j=1,2,3$ as in the  general assumption to this article.
First, we prove that  the functional $\Ga$ in \eqref{eq:gzphi} is
well posed.

\begin{theo}[{\rm Well-Posedness}] \label{th:admissible}
The functional $\Ga$ in \eqref{eq:gzphi} attains minimizers on the
set of admissible quadruples.
\end{theo}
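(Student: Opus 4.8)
The plan is to apply the direct method of the calculus of variations. First I would observe that the set of admissible quadruples is nonempty: by assumption \textbf{(A4)} together with Remark~\ref{remark-ad} there exists at least one admissible quadruple, so the infimum $M := \inf \Ga$ taken over admissible quadruples is well defined; since $\Ga \geq 0$ we have $0 \leq M < \infty$. I would then fix a minimizing sequence $(z_k,\phi_k,\psi^1_k,\psi^2_k)$ of admissible quadruples with $\Ga(z_k,\phi_k,\psi^1_k,\psi^2_k) \to M$.

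The next step is to extract uniform bounds. Because the residual term is nonnegative, the convergence $\Ga(z_k,\phi_k,\psi^1_k,\psi^2_k) \to M$ forces $\alpha R(z_k,\phi_k,\psi^1_k,\psi^2_k)$ to be bounded. Invoking the coercivity estimate of Lemma~\ref{lemma:rho-coer}, I obtain uniform bounds on $|z_k|_\bv$, on $\|\phi_k - \phi_0\|_{H^1(\Omega)}$ (hence on $\|\phi_k\|_{H^1(\Omega)}$), and on $|\psi^j_k - \psi^j_0|_\bv$ for $j=1,2$. Since every admissible $z_k$ takes values in $[0,1]$ a.e.\ and each $\psi^j_k \in \B$ is $L^1$-bounded, these $\bv$-seminorm bounds upgrade to uniform bounds in the full $\bv(\Omega)$-norm for $z_k$ and for $\psi^j_k$.

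I would then pass to convergent subsequences (not relabeled). The compact embedding $\bv(\Omega) \hookrightarrow L^1(\Omega)$ yields $z_k \to z$ and $\psi^j_k \to \psi^j$ in $L^1(\Omega)$, while boundedness in $H^1(\Omega)$ together with reflexivity and the Rellich--Kondrachov theorem yields $\phi_k \rightharpoonup \phi$ in $H^1(\Omega)$ and $\phi_k \to \phi$ in $L^2(\Omega)$. A single diagonal extraction makes all four convergences hold simultaneously. By Lemma~\ref{lemma:limit-ad}, the limit $(z,\phi,\psi^1,\psi^2)$ --- with $\phi \in H^1(\Omega)$ --- is again an admissible quadruple, hence a legitimate competitor in the minimization.

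Finally I would pass to the limit in the two terms of $\Ga$ separately. For the regularization term, the weak lower semicontinuity of Lemma~\ref{lemma:rho-lsc} (applicable precisely because $z_k \to z$ and $\psi^j_k \to \psi^j$ in $L^1(\Omega)$ and $\phi_k \rightharpoonup \phi$ in $H^1(\Omega)$) gives $R(z,\phi,\psi^1,\psi^2) \leq \liminf_k R(z_k,\phi_k,\psi^1_k,\psi^2_k)$. For the residual term, the estimate used in the proof of Lemma~\ref{lemma:q-cont}(i) --- together with Lemma~\ref{lemma:0}, which promotes $z_k \to z$ to $L^2(\Omega)$ --- shows $q(z_k,\psi^1_k,\psi^2_k) \to q(z,\psi^1,\psi^2)$ in $L^1(\Omega)$, whence assumption \textbf{(A2)} gives $F(q(z_k,\psi^1_k,\psi^2_k)) \to F(q(z,\psi^1,\psi^2))$ in $Y$ and continuity of the norm gives convergence of the residual. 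Adding these two estimates yields $\Ga(z,\phi,\psi^1,\psi^2) \leq \liminf_k \Ga(z_k,\phi_k,\psi^1_k,\psi^2_k) = M$; since the limit is admissible, $\Ga(z,\phi,\psi^1,\psi^2) \geq M$, so equality holds and the minimum is attained. I expect the main obstacle to be bookkeeping the different topologies consistently --- weak-$H^1$ for the level-set function but strong-$L^1$ for $z$ and the level values --- and in particular ensuring that the diagonally extracted subsequence meets the hypotheses of Lemma~\ref{lemma:limit-ad}, so that the candidate limit is genuinely admissible; the semicontinuity and continuity inputs are then routine given the earlier lemmas.
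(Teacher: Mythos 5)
Your proposal is correct and follows essentially the same route as the paper: a direct-method argument using non-emptiness of the admissible set, coercivity of $R$ to obtain $\bv\times H^1\times\bv^2$ bounds, compact embeddings to extract convergent subsequences, Lemma~\ref{lemma:limit-ad} for admissibility of the limit, and Lemma~\ref{lemma:rho-lsc} together with Lemma~\ref{lemma:q-cont} and \textbf{(A2)} to pass to the limit in the two terms of $\Ga$. The only cosmetic difference is that the paper certifies non-emptiness with the quadruple $(0,0,0,0)$ rather than via \textbf{(A4)} and Remark~\ref{remark-ad}, and leaves the coercivity step implicit rather than citing Lemma~\ref{lemma:rho-coer}.
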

\begin{proof}
Notice that, the set of admissible quadruples is not empty, since
$(0,0,0,0)$ is admissible. Let $\{(z_k,\phi_k,\psi^1_k,\psi^2_k) \}$
be a minimizing sequence for $\Ga$, i.e. a sequence of admissible
quadruples satisfying ${\cal{G}}_\alpha(z_k,\phi_k,
\psi^1_k,\psi^2_k) \to \inf {\cal{G}}_\alpha \leq
{\cal{G}}_\alpha(0,0,0,0) < \infty$. Then, $\{ \Ga(z_k,\phi_k,
\psi^1_k,\psi^2_k) \}$ is a bounded sequence of real numbers.
Therefore, $\{(z_k,\phi_k,\psi^1_k,\psi^2_k)\}$ is uniformly bounded
in $\bv \times H^1(\Omega)\times \bv^2$. Thus, from the Sobolev
Embedding Theorem \cite{Ada75, EG92}, we guarantee the existence of
a subsequence (denoted again by $\{ (z_k, \phi_k,\psi^1_k,\psi^2_k)
\}$) and the existence of $(z,\phi,\psi^1,\psi^2) \in L^1(\Omega)
\times H^1(\Omega)\times \bv^2$ such that $\phi_k \to \phi$ in
$L^2(\Omega)$, $\phi_k \rightharpoonup \phi$ in $H^1(\Omega)$, $z_k
\to z$ in $L^1(\Omega)$ and $\psi_k^j \to \psi^j \mbox{ in }
L^1(\Omega)$. Moreover, $z, \psi^1 \mbox{ and } \psi^2 \in \bv$. See
\cite[Theorem 4, pp. 176]{EG92}.

From Lemma~\ref{lemma:limit-ad}, we conclude that
$(z,\phi,\psi^1,\psi^2)$ is an admissible quadruple. Moreover, from
the weak lower semi-continuity of $R$ (Lemma~\ref{lemma:rho-lsc}),
together with the continuity of  $q$ (Lemma~\ref{lemma:q-cont}) and
continuity of $F$ (see the general assumption),  we obtain
\begin{eqnarray}
\inf \ \Ga & = & \lim_{k \to \infty} {\cal{G}}_\alpha(z_k,
\phi_k,\psi^1_k,\psi^2_k) \ = \ \lim_{k \to \infty} \big\{
\|F(q(z_k,\psi^1_k,\psi^2_k))-y^\delta\|^2_Y
      + \alpha R(z_k,\phi_k,\psi^1_k,\psi^2_k) \big \} \nonumber \\
& \geq & \| F(q(z,\psi^1,\psi^2)) - y^\delta \|^2_Y + \alpha
R(z,\phi,\psi^1,\psi^2) \   =  \ \Ga(z,\phi,\psi^1,\psi^2) \, ,
\label{eq:no}
\end{eqnarray}
proving that $(z,\phi,\psi^1,\psi^2)$ minimizes $\Ga$.
\end{proof}

In that follows, we shall denote a minimizer of $\Ga$ by $(z_\alpha,
\phi_\alpha,\psi^1_\alpha,\psi^2_\alpha)$. In particular the
functional $\hat{\Ga}$ in \eqref{eq:m-reg} attain a generalized
minimizer in the sense of Definition~\ref{def:quadruple}. In the
next theorem, we summarize some convergence results for the
regularized minimizers. These results are based on the existence of
a generalized \emph{minimum norm solutions}.

\begin{df}\label{def:R-min-norm-solution}
An admissible quadruple   $(z^{\dag}, \phi^{\dag}, \psi^{1,\dag},
\psi^{2,\dag})$ is called a \textit{$R$-minimizing solution} if
satisfies
\begin{align*}
(i) \, & \, F( q(z^{\dag},\psi^{1,\dag},\psi^{2,\dag}) ) = y \, , \\
(ii)\, & \, R(z^{\dag}, \phi^{\dag},\psi^{1,\dag},\psi^{2,\dag}) =
\mbox{ms}
         := \inf \big\{ R(z, \phi,\psi^1,\psi^2) \, ;\
         (z, \phi,\psi^1,\psi^2) \ \mbox{\rm is an} \\
       & \hskip5cm \mbox{\rm admissible quadruple and} \
         F(q(z,\psi^1,\psi^2)) = y \big\} \; .
\end{align*}
\end{df}

\begin{theo}[{\rm $R$-minimizing solutions}] \label{th:min-norm}
Under the general assumptions of this paper there exists a
$R$-minimizing solution.
\end{theo}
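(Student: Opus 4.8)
The plan is to realize the $R$-minimizing solution as the limit of a minimizing sequence for the constrained problem defining $\mathrm{ms}$, reusing verbatim the compactness and lower-semicontinuity machinery already assembled for Theorem~\ref{th:admissible}. First I would check that the feasible set is nonempty, so that $\mathrm{ms}$ is a finite nonnegative number: by Assumption~\textbf{(A4)} together with the Remark following Definition~\ref{def:quadruple}, the quadruple $(z_*,\phi_*,\psi^1_*,\psi^2_*)$ is admissible and satisfies $F(q(z_*,\psi^1_*,\psi^2_*)) = F(u_*) = y$. Since $R$ is a sum of nonnegative terms, $\mathrm{ms} = \inf\{R(z,\phi,\psi^1,\psi^2)\}$ over this nonempty feasible set is well defined and finite.

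Next I would select a minimizing sequence $\{(z_k,\phi_k,\psi^1_k,\psi^2_k)\}$ of admissible quadruples with $F(q(z_k,\psi^1_k,\psi^2_k)) = y$ for every $k$ and $R(z_k,\phi_k,\psi^1_k,\psi^2_k) \to \mathrm{ms}$. By the coercivity of $R$ (Lemma~\ref{lemma:rho-coer}), the sequence is uniformly bounded in $\bv \times H^1(\Omega) \times \bv^2$, so the Sobolev embedding and $\bv$-compactness theorems yield a subsequence (not relabeled) and a limit $(z,\phi,\psi^1,\psi^2)$ with $\phi_k \to \phi$ in $L^2(\Omega)$, $\phi_k \rightharpoonup \phi$ in $H^1(\Omega)$, $z_k \to z$ in $L^1(\Omega)$, and $\psi^j_k \to \psi^j$ in $L^1(\Omega)$, where moreover $z,\psi^1,\psi^2 \in \bv$. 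Lemma~\ref{lemma:limit-ad} then guarantees that the limit $(z,\phi,\psi^1,\psi^2)$ is itself an admissible quadruple.

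It remains to show the limit is feasible and attains $\mathrm{ms}$. For feasibility, I would invoke the $L^1(\Omega)$-continuity of $q$ (Lemma~\ref{lemma:q-cont}(i)) to get $q(z_k,\psi^1_k,\psi^2_k) \to q(z,\psi^1,\psi^2)$ in $L^1(\Omega)$, and then Assumption~\textbf{(A2)} (continuity of $F$ in the $L^1$-topology) to pass to the limit in $F(q(z_k,\psi^1_k,\psi^2_k)) = y$, obtaining $F(q(z,\psi^1,\psi^2)) = y$, which is condition (i). For optimality, the weak lower semicontinuity of $R$ (Lemma~\ref{lemma:rho-lsc}) gives $R(z,\phi,\psi^1,\psi^2) \le \liminf_{k} R(z_k,\phi_k,\psi^1_k,\psi^2_k) = \mathrm{ms}$; conversely, since the limit is feasible, $R(z,\phi,\psi^1,\psi^2) \ge \mathrm{ms}$ by definition of the infimum. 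Hence equality holds, giving condition (ii), and $(z,\phi,\psi^1,\psi^2)$ is an $R$-minimizing solution.

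I expect the main obstacle to be the stability of the constraint $F(q(\cdot)) = y$ under the convergence extracted from coercivity: the subsequence converges only in the weak/strong mixed topology $L^1 \times L^2 \times (L^1)^2$, so closedness of the feasible set hinges precisely on chaining the $L^1$-continuity of $q$ in Lemma~\ref{lemma:q-cont}(i) with the $L^1$-continuity of $F$ in \textbf{(A2)}; any gap between the topology in which compactness is obtained and the topology in which $F\circ q$ is continuous would break the argument. The admissibility of the limit, handled by Lemma~\ref{lemma:limit-ad}, is the second delicate point, but it is already isolated there, so here it is used as a black box.
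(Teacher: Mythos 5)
Your proposal is correct and follows essentially the same route as the paper's own proof: nonemptiness of the feasible set via Assumption \textbf{(A4)} and Remark~\ref{remark-ad}, boundedness of a minimizing sequence from the structure of $R$, compact embedding to extract limits in $L^1(\Omega)\times L^2(\Omega)\times (L^1(\Omega))^2$, admissibility of the limit via Lemma~\ref{lemma:limit-ad}, feasibility via Lemma~\ref{lemma:q-cont} and \textbf{(A2)}, and optimality via Lemma~\ref{lemma:rho-lsc}. Your explicit two-sided argument for condition (ii) (lower semicontinuity giving $\le \mathrm{ms}$ and feasibility giving $\ge \mathrm{ms}$) is a slightly cleaner statement of what the paper leaves implicit.
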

\begin{proof}
From the general assumption on this paper  and
Remark~\ref{remark-ad}, we conclude that the set of admissible
quadruple satisfying $F(q(z,\psi^1,\psi^2)) = y$  is not empty.
Thus, $ms$ in ({\it ii}) is finite and there exists a sequence
$\{(z_k,\phi_k,\psi^1_k,\psi^2_k)\}_{k\in\N}$ of admissible
quadruple satisfying
$$
F(q(z_k,\psi^1_k,\psi^2_k)) = y \quad \mbox{and} \quad
R(z_k,\phi_k,\psi^1_k,\psi^2_k) \to ms < \infty \; .
$$
Now, form the definition of $R$, it follows that the sequences $\{
\phi_k \}_{k\in\N}$, $\{ z_k \}_{k\in\N}$ and $\{ \psi^j_k
\}_{k\in\N}^{j=1,2}$ are uniformly bounded in $H^1(\Omega)$ and
$\bv(\Omega)$,  respectively. Then, from the Sobolev Compact
Embedding Theorem \cite{Ada75, EG92}, we have (up to subsequences)
that
$$
\phi_k \to \phi^{\dag} \ \mbox{ in } \ L^2(\Omega)\,, \quad  z_k \to
z^{\dag} \ \mbox{ in } \ L^1(\Omega) \quad \mbox{ and } \quad
\psi^j_k \to \psi^{j,\dag} \ \mbox{ in } \ L^1(\Omega)  \, , \ j =
1,2 \; .
$$
Lemma~\ref{lemma:limit-ad} implies that $(z^{\dag},
\phi^{\dag},\psi^{1,\dag},\psi^{2,\dag})$ is an admissible
quadruple. Since $R$ is weakly lower semi-continuous (cf. Lemma
\ref{lemma:rho-lsc}), it follows
$$
ms = \liminf_{k\to\infty} R(z_k,\phi_k,\psi^1_k,\psi^2_k)
     \geq R(z^{\dag},\phi^{\dag},\psi^{1,\dag},\psi^{2,\dag}) \; .
$$
Moreover, we conclude from Lemma~\ref{lemma:q-cont} that
$$ q(z^{\dag},\psi^{1,\dag},\psi^{1,\dag}) = \lim\limits_{k\to\infty}
q(z_k,\psi^1_k,\psi^2_k) \quad \mbox{ and } \quad F(
q(z^{\dag},\psi^{1,\dag},\psi^{2,\dag}) ) = \lim\limits_{k\to\infty}
F(q(z_k,\psi^1_k,\psi^2_k)) = y\,.$$ Thus,
$(z^{\dag},\phi^{\dag},\psi^{1,\dag},\psi^{2,\dag})$ is a $R$-
minimizing solution.
\end{proof}

Using classical techniques from the analysis of Tikhonov
regularization methods (see \cite{EngKunNeu89, EngHanNeu96}), we
present below the main convergence and stability theorems of this
paper. The arguments in the proof are somewhat different of that
presented in \cite{DCLT09, DCLT2010}. But, for sake of completeness,
we present the proof.

\begin{theo}[{\bf Convergence for exact data}] \label{th:converg}
Assume that we have exact data, i.e. $y^\delta=y$. For every $\alpha
> 0$ let $(z_\alpha,\phi_\alpha,\psi^1_\alpha,\psi^2_\alpha)$ denote a
minimizer of $\Ga$ on the set of admissible quadruples. Then, for
every sequence of positive numbers $\{\alpha_k\}_{k\in\N}$
converging to zero there exists a subsequence, denoted again by
$\{\alpha_k\}_{l\in\N}$, such that
$(z_{\alpha_k},\phi_{\alpha_k},\psi^1_{\alpha_k},\psi^2_{\alpha_k})$
is strongly convergent in $L^1(\Omega) \times L^2(\Omega)\times
(L^1(\Omega))^2$. Moreover, the limit is a solution of
\eqref{eq:inv-probl}.
\end{theo}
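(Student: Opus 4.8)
The plan is to show that the minimizers $(z_{\alpha_k},\phi_{\alpha_k},\psi^1_{\alpha_k},\psi^2_{\alpha_k})$ converge, along a subsequence, to an admissible quadruple that solves the inverse problem. The key observation is to compare the value of $\Ga$ at the minimizer with its value at a known solution. By Theorem~\ref{th:min-norm} there exists an $R$-minimizing solution $(z^{\dag},\phi^{\dag},\psi^{1,\dag},\psi^{2,\dag})$, which is an admissible quadruple with $F(q(z^{\dag},\psi^{1,\dag},\psi^{2,\dag}))=y$ and $R(z^{\dag},\phi^{\dag},\psi^{1,\dag},\psi^{2,\dag})=\mathrm{ms}$. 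Since $(z_{\alpha_k},\dots)$ minimizes $\Ga$ and we have exact data $y^\delta=y$, I would write
\begin{align*}
\norm{F(q(z_{\alpha_k},\psi^1_{\alpha_k},\psi^2_{\alpha_k}))-y}_Y^2
+\alpha_k R(z_{\alpha_k},\phi_{\alpha_k},\psi^1_{\alpha_k},\psi^2_{\alpha_k})
\leq \alpha_k\, \mathrm{ms}\,,
\end{align*}
using that the residual term vanishes at the $R$-minimizing solution. This single inequality is the engine of the whole proof.

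From this inequality two consequences follow immediately. First, dividing by $\alpha_k>0$ gives $R(z_{\alpha_k},\phi_{\alpha_k},\psi^1_{\alpha_k},\psi^2_{\alpha_k})\leq \mathrm{ms}$ for all $k$, so the sequence of regularization values is uniformly bounded. By coercivity of $R$ (Lemma~\ref{lemma:rho-coer}), this bounds $\{z_{\alpha_k}\}$ and $\{\psi^j_{\alpha_k}\}$ in $\bv(\Omega)$ and $\{\phi_{\alpha_k}\}$ in $H^1(\Omega)$. Second, the residual satisfies $\norm{F(q(z_{\alpha_k},\psi^1_{\alpha_k},\psi^2_{\alpha_k}))-y}_Y^2\leq \alpha_k\,\mathrm{ms}\to 0$ as $k\to\infty$. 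Now I would invoke the Sobolev compact embedding theorem exactly as in the proof of Theorem~\ref{th:min-norm}: passing to a subsequence, $\phi_{\alpha_k}\to\phi$ in $L^2(\Omega)$ (and weakly in $H^1$), $z_{\alpha_k}\to z$ in $L^1(\Omega)$, and $\psi^j_{\alpha_k}\to\psi^j$ in $L^1(\Omega)$, for some limit $(z,\phi,\psi^1,\psi^2)$ with $z,\psi^1,\psi^2\in\bv$ and $\phi\in H^1(\Omega)$, which establishes the claimed strong convergence in $L^1(\Omega)\times L^2(\Omega)\times(L^1(\Omega))^2$.

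It remains to identify the limit as a solution of \eqref{eq:inv-probl}. Lemma~\ref{lemma:limit-ad} guarantees that the limit $(z,\phi,\psi^1,\psi^2)$ is itself an admissible quadruple, since it is the $L^1\times L^2\times(L^1)^2$-limit of admissible quadruples and $\phi\in H^1(\Omega)$. By the continuity of $q$ from $L^1\times\bv^2$ into $L^1(\Omega)$ (Lemma~\ref{lemma:q-cont}(i)) together with the $L^1$-continuity of $F$ on $D(F)$ (assumption \textbf{(A2)}), we obtain
$$
F(q(z,\psi^1,\psi^2))=\lim_{k\to\infty}F(q(z_{\alpha_k},\psi^1_{\alpha_k},\psi^2_{\alpha_k}))=y\,,
$$
where the last equality uses that the residual tends to zero. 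Hence $u:=q(z,\psi^1,\psi^2)\in D(F)$ satisfies $F(u)=y$, so $u$ solves \eqref{eq:inv-probl}, completing the argument.

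The main obstacle, and the step requiring the most care, is the very first inequality: one must ensure the comparison quadruple is genuinely admissible so that it is a legitimate competitor for the minimizer of $\Ga$, and that the exact-data hypothesis makes its residual vanish. This is precisely where Theorem~\ref{th:min-norm} is essential. A secondary subtlety is that the coercivity estimate and the compactness extraction must be applied uniformly in $k$; since the bound $R(z_{\alpha_k},\dots)\leq\mathrm{ms}$ is independent of $\alpha_k$, the same Sobolev compactness argument used in Theorem~\ref{th:min-norm} carries over verbatim, and the closedness of the admissible set under Lemma~\ref{lemma:limit-ad} then delivers admissibility of the limit without further work.
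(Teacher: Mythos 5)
Your proof is correct and follows essentially the same route as the paper: compare $\Ga$ at its minimizer with its value at the $R$-minimizing solution from Theorem~\ref{th:min-norm}, deduce the uniform bound $R(z_{\alpha_k},\phi_{\alpha_k},\psi^1_{\alpha_k},\psi^2_{\alpha_k})\leq \mathrm{ms}$ and the vanishing of the residual, extract a convergent subsequence via coercivity and compact embedding, and identify the limit using Lemma~\ref{lemma:limit-ad}, Lemma~\ref{lemma:q-cont} and the continuity of $F$. The only difference is cosmetic: the paper additionally records, via lower semi-continuity of $R$, that the limit is in fact an $R$-minimizing solution, which is not required by the statement as given.
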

\begin{proof}
Let $(z^{\dag}, \phi^{\dag},\psi^{1,\dag},\psi^{2,\dag})$ be a
$R$-minimizing solution of \eqref{eq:inv-probl} -- its existence is
guaranteed by Theorem~\ref{th:min-norm}. Let $\{\alpha_k\}_{k\in\N}$
be a sequence of positive numbers converging to zero. For each $k
\in \N$, denote $(z_k,\phi_k,\psi^1_k,\psi^2_k) :=
(z_{\alpha_k},\phi_{\alpha_k}, \psi^1_{\alpha_k},\psi^2_{\alpha_k})$
be a minimizer of $G_{\alpha_k}$. Then, for each $k \in \N$, we have
\begin{equation} \label{eq:0}
G_{\alpha_k}(z_k,\phi_k,\psi^1_k,\psi^2_k)
 \leq \big\| F(q(z^{\dag},\psi^{1,\dag},\psi^{2,\dag})) - y \big\|
 + \alpha_k R(z^{\dag}, \phi^{\dag},\psi^{1,\dag},\psi^{2,\dag})
 = \alpha_k R(z^{\dag}, \phi^{\dag},\psi^{1,\dag},\psi^{2,\dag}) .
\end{equation}
Since $\alpha_k R(z_k,,\phi_k,\psi^1_k,\psi^2_k) \le
G_{\alpha_k}(z_k, \phi_k,\psi^1_k,\psi^2_k)$, it follows from
\eqref{eq:0} that
\begin{equation} \label{eq:1}
R(z_k,\phi_k,\psi^1_k,\psi^2_k) \ \leq \
R(z^{\dag},\phi^{\dag},\psi^{1,\dag},\psi^{2,\dag}) \ < \ \infty  \;
.
\end{equation}
Moreover, from the assumption on the sequence $\{ \alpha_k \}$, it follows that
\begin{equation} \label{eq:2}
\lim_{k\to\infty} \ \alpha_k R
        (z^{\dag}, \phi^{\dag},\psi^{1,\dag},\psi^{1,\dag}) \ = \ 0 \; .
\end{equation}
From \eqref{eq:1} and Lemma~\ref{lemma:rho-coer}, we conclude that
sequences $\{ \phi_k \}$, $\{ z_k \}$ and $\{\psi^j_k\}$ are bounded
in $H^1(\Omega)$\,and  $\bv$, respectively, for $j=1,2$.
Using an argument of extraction of diagonal subsequences (see proof of Lemma~%
\ref{lemma:limit-ad}), we can guarantee the existence of an
admissible quadruple $(\tilde{z},\tilde{\phi},
\tilde{\psi}^1,\tilde{\psi}^2)$ such that
$$
(z_k, \phi_k,\psi^1_k,\psi^2_k) \to (\tilde{z},\tilde{\phi},
\tilde{\psi}^1,\tilde{\psi}^2) \ \mbox{ in } \ L^1(\Omega) \times
L^2(\Omega)\times (L^1(\Omega))^2 \, .
$$
Now, from Lemma~\ref{lemma:q-cont}~(i), it follows that
$q(\tilde{z},\tilde{\psi}^1,\tilde{\psi}^2) =
\lim\limits_{k\to\infty} q(z_k,\psi^1_k,\psi^2_k)$ in $L^1(\Omega)$.
Using the continuity of the operator $F$ together with \eqref{eq:0}
and \eqref{eq:2}, we conclude that
$$
y \ = \ \lim_{k\to\infty} F( q(z_k,\psi^1_k,\psi^2_k)) \ = \
F(q(\tilde{z},\tilde{\psi}^1,\tilde{\psi}^2)) \; .
$$
On the other hand, from the lower semi-continuity of $R$ and
\eqref{eq:1} it follows that
$$
R(\tilde{z}, \tilde{\phi},\tilde{\psi}^1,\tilde{\psi}^2) \ \leq \
\liminf_{k\to\infty} R(z_k,\phi_k,\psi^1_k,\psi^2_k) \ \leq \
\limsup_{k\to\infty} R(z_k,\phi_k,\psi^1_k,\psi^2_k)) \ \leq \
R(z^{\dag},\phi^{\dag},\tilde{\psi}^1,\tilde{\psi}^2) \, ,
$$
concluding the proof.
\end{proof}

\begin{theo}[{\bf Stability}] \label{th:stabil}
Let $\alpha = \alpha(\delta)$ be a function satisfying
$\lim\limits_{\delta \to 0} \alpha (\delta)$ $= 0$ and
$\lim\limits_{\delta \to 0} \delta^2 \alpha(\delta)^{-1} = 0$.
Moreover, let $\{ \delta_k \}_{k\in\N}$ be a sequence of positive
numbers converging to zero and $y^{\delta_k} \in Y$ be corresponding
noisy data satisfying \eqref{eq:noisy-data}. Then, there exist a
subsequence, denoted again by $\{ \delta_k \}$, and a sequence $\{
\alpha_k := \alpha(\delta_k) \}$ such that
$(z_{\alpha_k},\phi_{\alpha_k},
\psi^1_{\alpha_k},\psi^2_{\alpha_k})$ converges in $L^1(\Omega)
\times L^2(\Omega)\times (L^1(\Omega))^2$ to solution of
\eqref{eq:inv-probl}.
\end{theo}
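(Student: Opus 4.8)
The plan is to mirror the structure of the convergence theorem for exact data (Theorem~\ref{th:converg}), but now tracking the noise level $\delta_k$ and using the parameter-choice conditions $\alpha(\delta)\to 0$ and $\delta^2\alpha(\delta)^{-1}\to 0$ to control the residual. Fix an $R$-minimizing solution $(z^\dag,\phi^\dag,\psi^{1,\dag},\psi^{2,\dag})$, whose existence is guaranteed by Theorem~\ref{th:min-norm}, and for each $k$ let $(z_k,\phi_k,\psi^1_k,\psi^2_k):=(z_{\alpha_k},\phi_{\alpha_k},\psi^1_{\alpha_k},\psi^2_{\alpha_k})$ denote a minimizer of $\Ga$ with $\alpha=\alpha_k$ and data $y^{\delta_k}$. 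First I would exploit minimality: since the $R$-minimizing solution is admissible and satisfies $F(q(z^\dag,\psi^{1,\dag},\psi^{2,\dag}))=y$, comparing $\Ga(z_k,\phi_k,\psi^1_k,\psi^2_k)$ against the value at the $R$-minimizing solution and using $\|F(q(z^\dag,\cdots))-y^{\delta_k}\|_Y^2=\|y-y^{\delta_k}\|_Y^2\le\delta_k^2$ gives
\begin{equation}\label{eq:stab-basic}
\|F(q(z_k,\psi^1_k,\psi^2_k))-y^{\delta_k}\|_Y^2+\alpha_k R(z_k,\phi_k,\psi^1_k,\psi^2_k)\ \le\ \delta_k^2+\alpha_k R(z^\dag,\phi^\dag,\psi^{1,\dag},\psi^{2,\dag}).
\end{equation}
Dividing the $R$-term by $\alpha_k$ and using $\delta_k^2\alpha_k^{-1}\to 0$ yields $\limsup_k R(z_k,\phi_k,\psi^1_k,\psi^2_k)\le R(z^\dag,\phi^\dag,\psi^{1,\dag},\psi^{2,\dag})<\infty$, so the $R$-values are uniformly bounded.

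Next I would extract a convergent subsequence. By Lemma~\ref{lemma:rho-coer} the boundedness of $R(z_k,\phi_k,\psi^1_k,\psi^2_k)$ forces $\{\phi_k\}$ to be bounded in $H^1(\Omega)$ and $\{z_k\},\{\psi^j_k\}$ bounded in $\bv(\Omega)$. By the Sobolev compact embedding theorem \cite{Ada75,EG92} I pass to a subsequence with $\phi_k\to\tilde\phi$ in $L^2(\Omega)$ (and $\phi_k\rightharpoonup\tilde\phi$ in $H^1(\Omega)$), $z_k\to\tilde z$ in $L^1(\Omega)$, and $\psi^j_k\to\tilde\psi^j$ in $L^1(\Omega)$, with $\tilde z,\tilde\psi^1,\tilde\psi^2\in\bv$. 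Because the underlying diagonal-sequence construction of each admissible quadruple is exactly as in Lemma~\ref{lemma:limit-ad}, that lemma applies verbatim and shows the limit $(\tilde z,\tilde\phi,\tilde\psi^1,\tilde\psi^2)$ is again an admissible quadruple.

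It then remains to verify that the limit solves \eqref{eq:inv-probl}. From \eqref{eq:stab-basic}, the residual satisfies $\|F(q(z_k,\psi^1_k,\psi^2_k))-y^{\delta_k}\|_Y^2\le\delta_k^2+\alpha_k R(z^\dag,\phi^\dag,\psi^{1,\dag},\psi^{2,\dag})\to 0$, and since $\|y^{\delta_k}-y\|_Y\le\delta_k\to 0$ we get $F(q(z_k,\psi^1_k,\psi^2_k))\to y$ in $Y$. On the other hand, Lemma~\ref{lemma:q-cont}~(i) gives $q(z_k,\psi^1_k,\psi^2_k)\to q(\tilde z,\tilde\psi^1,\tilde\psi^2)$ in $L^1(\Omega)$, and the continuity of $F$ in the $L^1$-topology (assumption \textbf{(A2)}) yields $F(q(\tilde z,\tilde\psi^1,\tilde\psi^2))=\lim_k F(q(z_k,\psi^1_k,\psi^2_k))=y$. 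Hence $(\tilde z,\tilde\phi,\tilde\psi^1,\tilde\psi^2)$ is an admissible quadruple whose parameter $q(\tilde z,\tilde\psi^1,\tilde\psi^2)$ solves \eqref{eq:inv-probl}, which is the assertion.

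The main obstacle I anticipate is not any single estimate but the bookkeeping of the \emph{two} limiting processes: each $(z_k,\phi_k,\psi^1_k,\psi^2_k)$ is itself only approximated by its defining sequences $\{\ve^l_k\},\{\phi_{k,l}\},\{\psi^j_{k,l}\}$ in the sense of Definition~\ref{def:quadruple}, so establishing that the outer limit is admissible requires the diagonalization from Lemma~\ref{lemma:limit-ad} rather than a naive passage to the limit. Invoking that lemma as a black box is what makes the argument go through; without it one would have to re-derive the $z\in L^\infty(\Omega)$ bound and the $\bv$-convergence of the $\psi^j$ by hand. Strong convergence in $L^1(\Omega)$ (as opposed to mere weak-$*$ convergence) is essential here and is exactly what the compact $\bv\hookrightarrow L^1$ embedding supplies, which is why the coercivity in $\bv$ from Lemma~\ref{lemma:rho-coer} is indispensable.
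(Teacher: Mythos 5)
Your proposal is correct and follows essentially the same route as the paper's own proof: the minimality comparison against the $R$-minimizing solution to obtain $R(z_k,\phi_k,\psi^1_k,\psi^2_k)\le \delta_k^2\alpha_k^{-1}+R(z^\dag,\phi^\dag,\psi^{1,\dag},\psi^{2,\dag})$ and the vanishing residual, followed by coercivity, compact embedding, Lemma~\ref{lemma:limit-ad} for admissibility of the limit, and Lemma~\ref{lemma:q-cont} with assumption \textbf{(A2)} to conclude $F(q(\tilde z,\tilde\psi^1,\tilde\psi^2))=y$. The only difference is cosmetic: where the paper defers to ``the same arguments as in Theorem~\ref{th:converg},'' you spell out the extraction of the convergent subsequence explicitly, which is a faithful expansion rather than a new argument.
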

\begin{proof}
Let $(z^{\dag},\phi^{\dag}, \psi^{1,\dag},\psi^{1,\dag})$ be a
$R$-minimizer solution of \eqref{eq:inv-probl} (such existence is
guaranteed by Theorem~\ref{th:min-norm}). For each $k \in \N$, let
$(z_k,\phi_k, \psi^1_k,\psi^2_k) :=
(z_{\alpha(\delta_k)},\phi_{\alpha(\delta_k)},
\psi^1_{\alpha(\delta_k)},\psi^2_{\alpha(\delta_k)})$ be a minimizer
of $G_{\alpha(\delta_k)}$. Then, for each $k \in \N$ we have
\begin{align} \label{eq:0a}
G_{\alpha_k} (z_k,\phi_k,\psi^1_k,\psi^2_k)
 & \leq \big\| F( q(z^{\dag},\psi^{1,\dag},\psi^{1,\dag}) ) - y^{\delta_k} \big\|_Y^2
        + \alpha(\delta_k) R(z^{\dag},\phi^{\dag},\psi^{1,\dag},\psi^{2,\dag})
        \nonumber \\
 & \leq \ \delta_k^2 + \alpha(\delta_k)
        R(z^{\dag},\phi^{\dag}, \psi^{1,\dag},\psi^{2,\dag}) \, .
\end{align}
From \eqref{eq:0a} and the definition of $G_{\alpha_k}$, it follows
that
\begin{equation} \label{eq:1a}
R(z_k,\phi_k,\psi^1_k,\psi^2_k) \ \leq \
  \frac{\delta_k^2}{\alpha(\delta_k)} +
  R(z^{\dag}, \phi^{\dag}, \psi^{1,\dag},\psi^{2,\dag}) \; .
\end{equation}

Taking the limit as $k\to\infty$ in \eqref{eq:1a}, it follows from
theorem assumptions on $\alpha(\delta_k)$, that
$$
\lim_{k\to\infty} \norm{ F( q(z_k,\psi^1_k,\psi^2_k)) - y^{\delta_k}
} \ \leq \ \lim_{k\to\infty} \left(\delta_k^2 + \alpha(\delta_k)
        R(z^{\dag},\phi^{\dag}, \psi^{1,\dag},\psi^{2,\dag}) \right) \ = \ 0 \,
        ,
$$
and
\begin{equation} \label{eq:1b}
\limsup_{k\to\infty} R(z_k,\phi_k,\psi^1_k,\psi^2_k) \ \leq \
R(z^{\dag},\phi^{\dag},\psi^{1,\dag},\psi^{2,\dag}) \; .
\end{equation}
With the same  arguments as in the proof of
Theorem~\ref{th:converg}, we conclude that, at least a subsequence
that we denote again by $(z_k,\phi_k,\psi^1_k,\psi^2_k)$, converge
in $L^1(\Omega)\times L^2(\Omega) \times (L^1(\Omega))^2$ to some
admissible quadruple $(z,\phi,\psi^1,\psi^2)$. Moreover, by taking
the limit as $k\to\infty$ in \eqref{eq:0a}, it follows from the
assumption on $F$ and Lemma~\ref{lemma:q-cont} that
$$F(q(z,\phi,\psi^1,\psi^2))=\lim\limits_{k\to\infty}
F(q(z_k,\psi^1_k,\psi^2_k)) = y\,.$$
\end{proof}

The functional $\Ga$ defined in \eqref{eq:gzphi} is not easy to
handled numerically, i.e., we are not able to derive a suitable
optimality condition to the minimizers of $\Ga$. In the next
section, we work in sight to surpass such difficulty.

%----------------------------------------------------------------------------%
\section{Numerical Solution} \label{sec:num-sol}

In this section,  we introduce a functional which can be handled
numerically, and whose minimizers are 'near' to the minimizers of
$\Ga$. Let $\Ger$ be the  functional defined by
\begin{equation} \label{eq:m-reg}
\Ger(\phi,\psi^1,\psi^2) := \| F(P_\ve(\phi,\psi^1,\psi^2)) -
y^\delta \|_Y^2 + \alpha \big( \beta_1 |H_\ve(\phi)|_{\bv} + \beta_2
\| \phi - \phi_0 \|_{H^1}^2 + \beta_3 {\textstyle
\sum\limits_{j=1}^2|\psi^j-\psi^j_0|_\bv} \big) ,
\end{equation}
where $P_\ve(\phi,\psi^1,\psi^2) := q(H_\ve(\phi), \psi^1,\psi^2)$
is defined in \eqref{eq:def-Pve}. The functional $\Ger$ is
well-posed as the following lemma shows:

\begin{lemma}\label{lemma:Gae}
Given positive constants $\alpha$, $\ve$, $\beta_j$ as in the
general assumption of this article, $\phi_0 \in H^1(\Omega)$ and
$\psi_0^j \in \B$, $j = 1,2$. Then, the functional $\Ger$ in
\eqref{eq:m-reg} attains a minimizer on $H^1(\Omega) \times
(\bv)^2$.
\end{lemma}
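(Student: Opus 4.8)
The plan is to prove Lemma~\ref{lemma:Gae} by the direct method of the calculus of variations, now working in the \emph{genuine} space $H^1(\Omega)\times(\bv)^2$ rather than on the set of admissible quadruples. Since $\ve$ is fixed here, the operator $P_\ve=q(H_\ve(\cdot),\cdot,\cdot)$ is genuinely continuous in the relevant topologies (by Lemma~\ref{lemma:q-cont}~(iii)), which is precisely what was missing for the original functional $\Ga$; this is what makes $\Ger$ tractable. First I would observe that the set over which we minimize is nonempty and that $\Ger\ge 0$, so $m:=\inf\Ger$ is a finite nonnegative number. Then I would pick a minimizing sequence $\{(\phi_k,\psi^1_k,\psi^2_k)\}$ with $\Ger(\phi_k,\psi^1_k,\psi^2_k)\to m$, and extract limits by compactness.

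The coercivity/boundedness step comes next. Because $\Ger(\phi_k,\psi^1_k,\psi^2_k)$ is bounded, the regularization term forces $\beta_2\|\phi_k-\phi_0\|_{H^1(\Omega)}^2$ to be bounded, so $\{\phi_k\}$ is bounded in $H^1(\Omega)$; likewise $\beta_3\sum_j|\psi^j_k-\psi^j_0|_\bv$ is bounded, and since each $\psi^j_k\in\B$ is uniformly bounded in $L^\infty$ (hence in $L^1$), the sequences $\{\psi^j_k\}$ are bounded in $\bv(\Omega)$. Invoking the Sobolev compact embedding of $H^1(\Omega)$ into $L^2(\Omega)$ and of $\bv(\Omega)$ into $L^1(\Omega)$ (as in \cite[Theorem~4, pp.~176]{EG92}), I can pass to a subsequence so that $\phi_k\rightharpoonup\phi$ in $H^1(\Omega)$ and $\phi_k\to\phi$ in $L^2(\Omega)$, and $\psi^j_k\to\psi^j$ in $L^1(\Omega)$ with $\psi^j\in\bv(\Omega)$. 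Note that $\psi^j\in\B$ is preserved because the pointwise bounds $m\le\psi^j_k\le M$ pass to the $L^1$-limit (up to a further a.e.-convergent subsequence), so the limit is an admissible competitor.

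Then comes the lower-semicontinuity argument that closes the proof. For the fidelity term, $\phi_k\to\phi$ in $L^2(\Omega)$ gives $H_\ve(\phi_k)\to H_\ve(\phi)$ in $L^1(\Omega)$ by Lemma~\ref{lemma:q-cont}~(iii), and combined with $\psi^j_k\to\psi^j$ in $L^1(\Omega)$ the same lemma yields $P_\ve(\phi_k,\psi^1_k,\psi^2_k)=q(H_\ve(\phi_k),\psi^1_k,\psi^2_k)\to q(H_\ve(\phi),\psi^1,\psi^2)=P_\ve(\phi,\psi^1,\psi^2)$ in $L^1(\Omega)$; continuity of $F$ in the $L^1$-topology (Assumption~\textbf{(A2)}) then makes $\|F(P_\ve(\cdot))-y^\delta\|_Y^2$ continuous along the sequence. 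For the penalty, I would use weak lower semicontinuity of the $H^1$-norm under $\phi_k\rightharpoonup\phi$, lower semicontinuity of the $\bv$-seminorm $|H_\ve(\phi_k)|_\bv$ and of $|\psi^j_k-\psi^j_0|_\bv$ under $L^1$-convergence (the same variational-measure lower-semicontinuity used in Lemma~\ref{lemma:rho-lsc}, \cite[Theorem~1, pg.~172]{EG92}). Putting these together gives
\begin{equation*}
\Ger(\phi,\psi^1,\psi^2)\ \le\ \liminf_{k\to\infty}\Ger(\phi_k,\psi^1_k,\psi^2_k)\ =\ m,
\end{equation*}
so the limit is a minimizer.

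The main obstacle I anticipate is the lower-semicontinuity of the term $|H_\ve(\phi_k)|_\bv$ under merely weak $H^1$ (equivalently strong $L^2$) convergence of $\phi_k$: one must verify that $H_\ve(\phi_k)\to H_\ve(\phi)$ in $L^1(\Omega)$ (supplied by the Lipschitz-type estimate in Lemma~\ref{lemma:q-cont}~(iii), which uses the fixed $\ve$ in an essential way) and then apply lower semicontinuity of the total-variation seminorm with respect to $L^1$-convergence. Everything else is a standard direct-method assembly, but this is the step where the fixed-$\ve$ smoothing is genuinely needed and where the argument differs from the generalized-minimizer construction for $\Ga$.
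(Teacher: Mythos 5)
Your proposal is correct and follows essentially the same route as the paper: a direct-method argument with boundedness of the minimizing sequence in $H^1(\Omega)\times(\bv)^2$, weak $H^1$ / strong $L^2$ / strong $L^1$ compactness, continuity of $F\circ P_\ve$ along the sequence via Lemma~\ref{lemma:q-cont}~(iii) and Assumption~\textbf{(A2)}, and lower semicontinuity of the $H^1$-norm and of the $\bv$-seminorms (including $|H_\ve(\phi_k)|_\bv$ via $L^1$-convergence of $H_\ve(\phi_k)$). The only cosmetic difference is that you spell out why the limit stays in $\B$ through an a.e.-convergent subsequence, where the paper defers to the argument of Lemma~\ref{lemma:auxil}.
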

\begin{proof}
Since, $\inf \{\Ger(\phi,\psi^1,\psi^2) \, : \ (\phi,\psi^1,\psi^2)
\in H^1(\Omega) \times (\bv)^2 \} \leq \Ger(0,0,0) < \infty$, there
exists a minimizing sequence $\{ (\phi_k,\psi^1_k,\psi^2_k) \}$ in
$H^1(\Omega)\times \B^2$ satisfying
$$
\lim_{k\to\infty} \Ger(\phi_k, \psi^1_k,\psi^2_k) \ = \ \inf \{
\Ger(\phi,\psi^1,\psi^2) \; : \ (\phi,\psi^1,\psi^2) \in H^1(\Omega)
\times
     \B^2 \} \, .
$$
Then, for fixed $\alpha > 0$, the definition of $\Ger$ in
\eqref{eq:m-reg} implies that the sequences $\{ \phi_k \}$ and $\{
\psi^j_k\}^{j=1,2}$ are bounded in $H^1(\Omega)$ and $(\bv)^2$,
respectively. Therefore, from Banach-Alaoglu-Bourbaki Theorem
\cite{Yosida95} $\phi_k \rightharpoonup \phi$ in $H^1(\Omega)$ and
from  \cite[Theorem 4 pg. 176]{EG92}, $\psi_k^j \rightarrow \psi^j$
in $L^1(\Omega)$, $j=1,2$. Now,  a similar argument as in
Lemma~\ref{lemma:auxil} implies that $\psi^j\in \B$, for $j=1,2$.
Moreover, by the weak lower semi-continuity of the $H^1$--norm
\cite{Yosida95} and $|\cdot|_\bv$ measure (see \cite[Theorem 1 pg.
172]{EG92}), it follows that $$\| \phi - \phi_0\|^2_{H^1} \leq
\liminf\limits_{k\to\infty} \| \phi_k - \phi_0 \|^2_{H^1} \, \mbox{
and }\; \ |\psi^j-\psi^j_0|_\bv \leq \liminf\limits_{k\to\infty} |
\psi^j_k - \psi^j_0 |_\bv\;.$$

The  compact embedding of $H^1(\Omega)$ into $L^2(\Omega)$
\cite{Ada75} implies in the existence of a subsequence of
$\{\phi_k\}$, (that we denote with the same index) such that $\phi_k
\to \phi$ in $L^2(\Omega)$. Follows from Lemma~\ref{lemma:q-cont}
and \cite[Theorem 1, pg 172]{EG92} that $| H_\ve(\phi) |_{\bv} \leq
\liminf\limits_{k\to\infty} | H_\ve(\phi_k) |_{\bv}$. Hence, from
continuity of $F$ in $L^1$, continuity of  $q$ (see
Lemma~\ref{lemma:q-cont}), together with the estimates above, we
conclude that
\begin{align*}
\Ger(\phi,\psi^1,\psi^2)
 & \leq \lim_{k\to\infty} \| F( P_\ve(\phi_k, \psi^1_k,\psi^2_k)) - y^\delta \|_{Y}^2 \\
 & \quad + \alpha \, \Big( \beta_1 \liminf_{k\to\infty} |H_\ve(\phi_k)|_{\bv}
   + \beta_2 \liminf_{k\to\infty} \|\phi_k - \phi_0\|_{H^1(\Omega)}^2
   + \beta_3 \liminf_{k\to\infty} {\textstyle\sum\limits_{j=1}^2 |\psi^j_k - \psi^j_0|_\bv}
     \Big) \\
 & \leq \liminf_{k\to\infty} \Ger(\phi_k, \psi^1_k,\psi^2_k) \ = \ \inf \Ger \, ,
\end{align*}
Therefore, $(\phi,\psi^1,\psi^2)$ is a minimizer of $\Ger$.
\end{proof}

In the sequel, we prove that, when $\ve \to 0$, the minimizers of
$\Ger$ approximate a minimizer of the functional $\Ga$. Hence,
numerically, the minimizer of $\Ger$ can be used as a suitable
approximation for the minimizers of $\Ga$.

\begin{theo} \label{th:just}
Let $\alpha$ and $\beta_j$ be given as in the general assumption of
this article. For each $\ve > 0$, denote by $(\phi_{\ve,\alpha},
\psi^1_{\ve,\alpha}, \psi^2_{\ve,\alpha})$ a minimizer of $\Ger$
(that there exist form Lemma~\ref{lemma:Gae}). Then, there exists a
sequence of positive numbers $\ve_k \to 0$ such that
$(H_{\ve_k}(\phi_{{\ve_k},\alpha}), \phi_{{\ve_k},\alpha},
\psi^1_{{\ve_k},\alpha}, \psi^2_{{\ve_k},\alpha})$ converges
strongly in $L^1(\Omega) \times \lzo \times (L^1(\Omega))^2$ and the
limit minimizes $\Ga$ on the set of admissible quadruples.
\end{theo}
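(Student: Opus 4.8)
The plan is to treat this as a $\Gamma$-convergence-type statement: I will combine a recovery (limsup) bound showing that the minimizers of $\mathcal{G}_{\ve,\alpha}$ cannot asymptotically beat $\min\Ga$, a compactness extraction, and a matching liminf inequality identifying the limit as a minimizer of $\Ga$. First I would fix a minimizer $(z_\alpha,\phi_\alpha,\psi^1_\alpha,\psi^2_\alpha)$ of $\Ga$, whose existence is guaranteed by Theorem~\ref{th:admissible}. By Lemma~\ref{lemma:R-auxl} there are realizing sequences $\{\ve_n\}$, $\{\phi_n\}\subset H^1(\Omega)$ and $\{\psi^j_n\}\subset\bv\cap C^\infty(\Omega)$ as in Definition~\ref{def:quadruple} along which $\beta_1|H_{\ve_n}(\phi_n)|_\bv+\beta_2\norm{\phi_n-\phi_0}^2_{H^1(\Omega)}+\beta_3\sum_j|\psi^j_n-\psi^j_0|_\bv\to R(z_\alpha,\phi_\alpha,\psi^1_\alpha,\psi^2_\alpha)$, while $\phi_n\to\phi_\alpha$ in $\lzo$, $H_{\ve_n}(\phi_n)\to z_\alpha$ and $\psi^j_n\to\psi^j_\alpha$ in $L^1(\Omega)$. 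Lemma~\ref{lemma:q-cont}~(i) then gives $q(H_{\ve_n}(\phi_n),\psi^1_n,\psi^2_n)\to q(z_\alpha,\psi^1_\alpha,\psi^2_\alpha)$ in $L^1(\Omega)$, so continuity of $F$ makes the data term converge; combining with the penalty convergence, $\mathcal{G}_{\ve_n,\alpha}(\phi_n,\psi^1_n,\psi^2_n)\to\Ga(z_\alpha,\phi_\alpha,\psi^1_\alpha,\psi^2_\alpha)=\min\Ga$. Since $(\phi_{\ve_n,\alpha},\psi^1_{\ve_n,\alpha},\psi^2_{\ve_n,\alpha})$ minimizes $\mathcal{G}_{\ve_n,\alpha}$, testing against the competitor $(\phi_n,\psi^1_n,\psi^2_n)$ yields the key uniform bound $\limsup_{n}\mathcal{G}_{\ve_n,\alpha}(\phi_{\ve_n,\alpha},\psi^1_{\ve_n,\alpha},\psi^2_{\ve_n,\alpha})\le\min\Ga$.

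From this bound the three penalty contributions $\beta_1|H_{\ve_n}(\phi_{\ve_n,\alpha})|_\bv$, $\beta_2\norm{\phi_{\ve_n,\alpha}-\phi_0}^2_{H^1(\Omega)}$ and $\beta_3\sum_j|\psi^j_{\ve_n,\alpha}-\psi^j_0|_\bv$ are bounded uniformly in $n$, so $\{\phi_{\ve_n,\alpha}\}$ is bounded in $H^1(\Omega)$ and $\{H_{\ve_n}(\phi_{\ve_n,\alpha})\}$, $\{\psi^j_{\ve_n,\alpha}\}$ are bounded in $\bv(\Omega)$. Using the compact embeddings $H^1(\Omega)\hookrightarrow\lzo$ and $\bv(\Omega)\hookrightarrow L^1(\Omega)$ I extract a subsequence, relabelled $\{\ve_k\}$, with $\phi_{\ve_k,\alpha}\rightharpoonup\phi^*$ in $H^1(\Omega)$ and $\phi_{\ve_k,\alpha}\to\phi^*$ in $\lzo$, $H_{\ve_k}(\phi_{\ve_k,\alpha})\to z^*$ in $L^1(\Omega)$, and $\psi^j_{\ve_k,\alpha}\to\psi^{j,*}$ in $L^1(\Omega)$. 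Since $H_{\ve_k}(\phi_{\ve_k,\alpha})\in[0,1]$ a.e.\ one has $z^*\in L^\infty(\Omega)$, and lower semicontinuity of the variation measure gives $\psi^{j,*}\in\bv(\Omega)$; this already delivers the asserted strong convergence in $L^1(\Omega)\times\lzo\times(L^1(\Omega))^2$.

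It remains to identify the limit. To see that $(z^*,\phi^*,\psi^{1,*},\psi^{2,*})$ is admissible, note that items a) and b) of Definition~\ref{def:quadruple} hold with the sequences at hand, since $\{\phi_{\ve_k,\alpha}\}\subset H^1(\Omega)$ converges to $\phi^*$ in $\lzo$ and $\{\ve_k\}$ tends to zero with $H_{\ve_k}(\phi_{\ve_k,\alpha})\to z^*$ in $L^1(\Omega)$; item c) follows from $\psi^{j,*}\in\bv$ via the smooth approximation theorem exactly as in Lemma~\ref{lemma:auxil}. For minimality I use a liminf inequality. The data term converges: by Lemma~\ref{lemma:q-cont}~(i) applied to $H_{\ve_k}(\phi_{\ve_k,\alpha})\to z^*$ and $\psi^j_{\ve_k,\alpha}\to\psi^{j,*}$ in $L^1(\Omega)$, one gets $P_{\ve_k}(\phi_{\ve_k,\alpha},\psi^1_{\ve_k,\alpha},\psi^2_{\ve_k,\alpha})\to q(z^*,\psi^{1,*},\psi^{2,*})$ in $L^1(\Omega)$, whence $\norm{F(P_{\ve_k}(\cdot))-y^\delta}_Y^2\to\norm{F(q(z^*,\psi^{1,*},\psi^{2,*}))-y^\delta}_Y^2$. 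For the penalty, the pair $(\{\ve_k\},\{\phi_{\ve_k,\alpha}\})$ is itself an admissible realizing sequence for $(z^*,\phi^*)$, so the definition of $\rho$ in \eqref{eq:rho} gives $\rho(z^*,\phi^*)\le\liminf_k[\beta_1|H_{\ve_k}(\phi_{\ve_k,\alpha})|_\bv+\beta_2\norm{\phi_{\ve_k,\alpha}-\phi_0}^2_{H^1(\Omega)}]$, while lower semicontinuity of the variation measure gives $\beta_3\sum_j|\psi^{j,*}-\psi^j_0|_\bv\le\liminf_k\beta_3\sum_j|\psi^j_{\ve_k,\alpha}-\psi^j_0|_\bv$. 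Adding these and recalling \eqref{def:R} yields $\Ga(z^*,\phi^*,\psi^{1,*},\psi^{2,*})\le\liminf_k\mathcal{G}_{\ve_k,\alpha}(\phi_{\ve_k,\alpha},\psi^1_{\ve_k,\alpha},\psi^2_{\ve_k,\alpha})$. Sandwiching with the recovery bound gives $\min\Ga\le\Ga(z^*,\phi^*,\psi^{1,*},\psi^{2,*})\le\liminf_k\mathcal{G}_{\ve_k,\alpha}\le\limsup_k\mathcal{G}_{\ve_k,\alpha}\le\min\Ga$, so equality holds throughout and $(z^*,\phi^*,\psi^{1,*},\psi^{2,*})$ minimizes $\Ga$.

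The main obstacle is the interplay between the $\ve$-dependence of $\mathcal{G}_{\ve,\alpha}$ and the infimum defining $\rho$ in \eqref{eq:rho}: the decisive point is that the single diagonal sequence $(\{\ve_k\},\{\phi_{\ve_k,\alpha}\})$ simultaneously realizes the admissibility of the limit (items a) and b) of Definition~\ref{def:quadruple}) and serves as a competitor in \eqref{eq:rho}, which is precisely what furnishes the liminf inequality for $\rho$ without re-optimizing over realizing sequences. A secondary technical care is ensuring that the realizing sequences for the level values $\psi^j$ converge in $L^1(\Omega)$, and not merely in variation, so that Lemma~\ref{lemma:q-cont}~(i) can pass the data term to the limit.
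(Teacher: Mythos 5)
Your proof is correct and follows essentially the same route as the paper's: build a recovery sequence from a minimizer of $\Ga$ via Lemma~\ref{lemma:R-auxl}, test the minimality of the $\Ger$-minimizers against it to get the limsup bound, extract a strongly convergent subsequence by compactness, verify admissibility of the limit, and close with the liminf/limsup sandwich using the definition of $\rho$ and lower semicontinuity of the variation measure. Your explicit remark that the approximating sequences $\psi^j_k$ must be chosen to converge in $L^1(\Omega)$, and not merely in variation, so that Lemma~\ref{lemma:q-cont} can pass the data term of the recovery sequence to the limit, addresses a point the paper leaves implicit but does not alter the argument.
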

\begin{proof}
Let $(z_\alpha,\phi_{\alpha}, \psi^1_{\alpha},\psi^2_{\alpha})$ be a
minimizer of the functional $\Ga$ on the set of admissible
quadruples (cf. Theorem~\ref{th:admissible}). From
Definition~\ref{def:quadruple}, there exists a sequence $\{ \ve_k
\}$ of positive numbers converging to zero and corresponding
sequences $\{ \phi_k \}$ in $H^1(\Omega)$ satisfying $\phi_k \to
\phi_{\alpha}$ in $L^2(\Omega)$, $H_{\ve_k}(\phi_k) \to z_\alpha$ in
$L^1(\Omega)$ and, finally, sequences $\{ \psi_k^j \}$ in $\bv\times
C_c^\infty(\Omega)$ such that $|\psi_k^j|_\bv \longrightarrow
|\psi^j|_\bv$ . Moreover, we can further assume (see
Lemma~\ref{lemma:R-auxl}) that
$$
R(z_\alpha, \phi_{\alpha},\psi^1_{\alpha},\psi^2_{\alpha}) \ = \
\lim_{k\to\infty}
  \big( \beta_1 |H_{\ve_k} (\phi_k)|_\bv + \beta_2 \|\phi_k -\phi_0\|^2_{H^1(\Omega)}
  + \beta_3 {\textstyle\sum\limits_{j=1}^2|\psi_k^j-\psi^j_0|_\bv} \big) \, .
$$
Let $(\phi_{\ve_k},\psi^1_{\ve_k},\psi^2_{\ve_k})$ be a minimizer of
${\cal{G}}_{\ve_k,\alpha}$. Hence,
$(\phi_{\ve_k},\psi^1_{\ve_k},\psi^2_{\ve_k})$ belongs to
$H^1(\Omega)\times \B^2$ (see Lemma~\ref{lemma:Gae}). The sequences
$\{ H_{\ve_k}(\phi_{\ve_k}) \}$,$\{ \phi_{\ve_k} \}$ and
$\{\psi_{\ve_k}^j\}$ are uniformly bounded in $\bv(\Omega)$,
$H^1(\Omega)$  and $\bv(\Omega)$, for $j=1,2$, respectively.
Form compact embedding (see Theorems \cite{Ada75} and \cite[Theorem
4 pg. 176]{EG92}),  there exist convergent subsequences whose limits
are denoted by $\tilde{z}$, $\tilde{\phi}$ and $\tilde{\psi}^j$
belong to $\bv(\Omega)$,$H^1(\Omega)$ and $\bv(\Omega)$, for
$j=1,2$, respectively.

Summarizing, we have $\phi_{\ve_k} \to \tilde{\phi}$ in
$L^2(\Omega)$, $H_{\ve_k}(\phi_{\ve_k}) \to \tilde{z}$ in
$L^1(\Omega)$, and $\psi^j_{\ve_k} \to \tilde{\psi}^j$ in
$L^1(\Omega)$, $j = 1,2$. Thus, $(\tilde{z},\tilde{\phi},
\tilde{\psi}^1,\tilde{\psi}^2) \in L^1(\Omega) \times H^1(\Omega)
\times \L^1(\Omega)$ is an admissible quadruple (cf.
Lemma~\ref{lemma:limit-ad}).

From the definition of $R$, Lemma~\ref{lemma:q-cont} and the continuity
of $F$, it follows that
\begin{eqnarray*}
& \| F(q(\tilde{z},\tilde{\psi}^1,\tilde{\psi}^2)) -  y^\delta
\|^2_Y \ = \
   \lim_{k\to\infty} \| F( P_{\ve_k}(\phi_{\ve_k},\psi^1_{\ve_k},\psi^2_{\ve_k}) )
                                                     - y^\delta \|_Y^2 \, , & \\
& R(\tilde{z},\tilde{\phi}, \tilde{\psi}^1,\tilde{\psi}^2) \ \leq \
  \liminf_{k\to\infty} \big( \beta_1 | H_{\ve_k}(\phi_{\ve_k}) |_\bv +
  \beta_2 \| \phi_{\ve_k}-\phi_0 \|^2_{H^1(\Omega)} +
  \beta_3 {\textstyle\sum_{j=1}^2 |\psi^j_{\ve_k} - \psi_0^j|_\bv} \Big) \, . &
\end{eqnarray*}
Therefore,
\begin{eqnarray*}
\Ga(\tilde{z}, \tilde{\phi},\tilde{\psi}^1,\tilde{\psi}^2)  & = &
  \| F(q(\tilde{z},\tilde{\psi}^1,\tilde{\psi}^2)) - y^\delta \|_Y^2
  + \alpha R(\tilde{z}, \tilde{\phi},\tilde{\psi}^1,\tilde{\psi}^2) \\
 & \le & \liminf_{k\to\infty} \ {\cal{G}}_{\ve_k,\alpha}
         (\phi_{\ve_k}, \psi^1_{\ve_k},\psi^2_{\ve_k})
 \ \le \ \liminf_{k\to\infty} {\cal{G}}_{\ve_k,\alpha}(\phi_k, \psi^1_k,\psi^2_k) \\
 & \le & \limsup_{k\to\infty}
         \| F( P_{\ve_k}(\phi_k, \psi^1_k,\psi^2_k) ) - y^\delta \|_Y^2 \\
 &     & + \ \alpha \limsup_{k\to\infty}  \big( \beta_1 | H_{\ve_k}(\phi_k) |_\bv
          + \beta_2 \| \phi_k - \phi_0 \|^2_{H^1(\Omega)}
          + \beta_3 {\textstyle\sum_{j=1}^2 |\psi^j_k-\psi^j_0|_\bv} \big) \\
 &  =  & \| F(q(z_\alpha, \psi^1_\alpha,\psi^2_\alpha)) - y^\delta \|_Y^2
         + \alpha R(z_\alpha, \phi_{\alpha},\psi^1_\alpha,\psi^2_\alpha)
 \  =  \ \Ga(z_\alpha, \phi^1_{\alpha},\psi^1_\alpha,\psi^2_\alpha) \, ,
\end{eqnarray*}
characterizing $(\tilde{z},
\tilde{\phi},\psi^1_\alpha,\psi^2_\alpha)$ as a minimizer of $\Ga$.
\end{proof}

%
%--------------------------------------%
\subsection{Optimality Conditions for the Stabilized Functional}

For numerical purposes it is convenient to derive first order
optimality conditions for minimizers of the functional $\Ga$. Since
$P$ is a discontinuous operator, it is not possible. However, thanks
to the Theorem~\ref{th:stabil}, the minimizers of the stabilized
functionals $\Ger$ can be used for approximate minimizers of the
functional $\Ga$. Therefore, we consider $\Ger$ in \eqref{eq:m-reg}
,with $Y$ a Hilbert space, and we look for the G\^ateaux directional
derivatives with respect to $\phi$ and the unknown $\psi^j$ for
$j=1,2$.

Since $H'_\ve(\phi)$ is self-adjoint\footnote{Note that
$H'_\ve(t)=\begin{cases} \frac{1}{\ve} \,\,
t\in (-\ve,0)\\
0\,\, \mbox{ other else}\,.
\end{cases}$}, we can write
the optimality conditions for the functional $\Ger$ in the form of
the system
\begin{subequations} \label{eq:formal0}
\begin{eqnarray}
& \alpha (\Delta-I)(\phi - \phi_0) \  = \
L_{\ve,\alpha,\beta}(\phi,\psi^1,\psi^2) \,,\qquad &  \ {\rm in } \
\Omega \, \\
& (\phi - \phi_0) \cdot \nu \  = \ 0 \, , \qquad\qquad \quad \quad   & \ {\rm at } \ \partial\Omega  \\
& \alpha \, \nabla \cdot \big[\nabla(\psi^j-\psi^j_0)/|\nabla
(\psi^j-\psi^j_0)|\big] \ = \
L^j_{\ve,\alpha,\beta}(\phi,\psi^1,\psi^2) \, , & \ j=1,2 \, .
\end{eqnarray}
\end{subequations}
Here $\nu(x)$ represents the external unit normal quadruple at $x
\in \partial\Omega$, and
\begin{subequations} \label{eq:formal1}
\begin{eqnarray}
L_{\ve,\alpha,\beta}(\phi,\psi^1,\psi^2) & = &
(\psi^1-\psi^2)\beta_2^{-1}
    H'_\ve(\phi)^* F'( P_\ve(\phi,\psi^1,\psi^2) )^*
    ( F(P_\ve(\phi,\psi^1,\psi^2)) - y^\delta ) \nonumber \\
& & \quad - \beta_1(2\beta_2)^{-1} H'_\ve(\phi) \,
    \nabla \cdot \big[ \nabla H_\ve(\phi) / |\nabla H_\ve (\phi)| \big] \, , \\
L^1_{\ve,\alpha,\beta}(\phi,\psi^1,\psi^2) & = & (2\beta_3)^{-1}
    \big( F'(P_\ve(\phi,\psi^1,\psi^2) )\, H_\ve(\phi) \big)^*
    ( F(P_\ve(\phi,\psi^1,\psi^2)) - y^\delta )  \\
L^2_{\ve,\alpha,\beta}(\phi,\psi^1,\psi^2) & = &  (2\beta_3)^{-1}
    \big( F'(P_\ve(\phi,\psi^1,\psi^2) )\, (1 - H_\ve(\phi)) \big)^*
    ( F(P_\ve(\phi,\psi^1,\psi^2)) - y^\delta ) \, .
\end{eqnarray}
\end{subequations}

It is worth noticing that the derivation of \eqref{eq:formal0} is
purely formal, since the $\bv$ seminorm is not differentiable.
Moreover the terms $|\nabla H_\ve (\phi)|$ and $|\nabla
(\psi^j-\psi^j_0)|$ appearing in the denominators of
\eqref{eq:formal0} and \eqref{eq:formal1}, respectively.

In Section~\ref{sec:numeric}, system \eqref{eq:formal0} and
\eqref{eq:formal1} is used as starting point for the derivation of a
level set type method.

\section{Inverse Elliptic Problems}\label{sec:numeric}

In this section, we discuss the proposed level set approach and
their application in some physical problems model by elliptic PDE's.
We also discuss briefly the numerical implementations of the
iterative method based on the level set approach. We remark that, in
the case of noise data the iterative algorithm derived by the level
set approach need an early stooping criteria \cite{EngHanNeu96}.

\subsection{The Inverse Potential Problem}

In this subsection, we apply the level set regularization framework
in an inverse potential problem \cite{DAP11, DCLT2010, HR96}.
Differently from \cite{DAP08, DAP11, DCLT2010, DCLT09, FSL05, DA06,
DAL09}, we assume that the source $u$ is not necessarily piecewise
constant. For relevant applications of the inverse potential problem
see \cite{HR96, Isa90, DAP11, DAP08} and references therein.

The forward problem consists of solving the Poisson boundary value
problem
\begin{equation} \label{eq:ipp}
-\nabla \cdot (\sigma \nabla   w) \ = \ u \, ,\ {\rm in} \ \Omega \,
, \quad \gamma_1 w + \gamma_2 w_\nu \ = \ g \, \ {\rm on} \
\partial\Omega\, ,
\end{equation}
on a given domain $\Omega \subset \mathbb R^n$ with $\partial
\Omega$ Lipschitz, for a given source function $u \in L^2(\Omega)$
and a boundary function $g \in L^2(\partial \Omega)$.
In \eqref{eq:ipp}, $\nu$ represent the outer normal vector to
$\partial \Omega$, $\sigma$ is a known sufficient smooth function.
Note that, depending of $\gamma_1, \gamma_2 \in \{0,1\}$, we have
Dirichlet, Neumann or Robin boundary condition. In this paper, we
only consider the case of Dirichlet boundary condition, that
corresponds to $\gamma_1 = 1$ and $\gamma_2 =0$ in \eqref{eq:ipp}.
Therefore, it is well known that there exists a unique solution $w
\in H^1(\Omega)$ of \eqref{eq:ipp} with $w- g \in H_0^1(\Omega)$,
\cite{DLions}.

Assuming homogeneous Dirichlet boundary condition in \eqref{eq:ipp},
the problem can be modeled by the operator equation
\begin{align}\label{eq:operator-ipp}
F_{1}\, :\, L^2(\Omega) &  \to L^2(\partial\Omega)\nonumber\\
\quad \quad \quad & u \longmapsto F_{1}(u) := w_{\nu}
|_{\partial\Omega}\,.
\end{align}

The corresponding inverse problem consists in recover the $L^2$
source function $u$, from measurements of the Cauchy data of its
corresponding potential $w$ on the boundary of $\Omega$.

Using this notation, the inverse potential problem can be written in
the abbreviated form $F_1(u) = y^\delta$, where the available noisy
data $y^\delta \in L^2(\partial\Omega)$ have the same meaning as in
\eqref{eq:noisy-data}. It is worth noticing that this inverse
problem has, in general, non unique solution \cite{HR96}. Therefore,
we restrict our attention to minimum-norm solutions
\cite{EngHanNeu96}. Sufficient conditions for identifiability are
given in \cite{Isa90}. Moreover, we restrict our attention to solve
the inverse problem \eqref{eq:operator-ipp} in $D(F)$, i.e., we
assume that the unknown parameter $u \in D(F)$, as defined in
Section~\ref{sec:min-concept}. Note that, in this situation, the
operator $F_1$ is linear. However, the inverse potential problem is
well known to be exponentially ill-posed \cite{Isa90}. Therefore,
the solution call for a regularization strategy \cite{EngHanNeu96,
HR96, Isa90}.

The following lemma implies that the operator $F_1$ satisfies the
Assumption {\bf(A2)}.
\begin{lemma}
The operator $F_1\,:\, D(F) \subset L^1(\Omega) \longrightarrow
L^2(\partial \Omega)$ is continuous with the respect to the
$L^1(\Omega)$ topology.
\end{lemma}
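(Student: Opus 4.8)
The plan is to prove continuity of the inverse potential operator $F_1$ with respect to the $L^1(\Omega)$-topology on $D(F)$ by exploiting the linearity of $F_1$ together with the smoothing effect of the elliptic solution operator. First I would take a sequence $u_k \to u$ in $L^1(\Omega)$ with $u_k, u \in D(F)$, and note that since $D(F)$ consists of functions of the form $q(v,\psi^1,\psi^2)$ with $\psi^j \in \B$, every such $u$ is bounded a.e.\ by $M$; hence by Lemma~\ref{lemma:0} the convergence $u_k \to u$ actually holds in $L^2(\Omega)$ as well. This is the key observation that upgrades the weak $L^1$ hypothesis into the $L^2$ setting where the forward solver is well behaved.

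Next I would write $w_k, w \in H^1(\Omega)$ for the solutions of the Dirichlet problem~\eqref{eq:ipp} with sources $u_k, u$ and homogeneous boundary data. By linearity, the difference $e_k := w_k - w$ solves $-\nabla\cdot(\sigma \nabla e_k) = u_k - u$ in $\Omega$ with $e_k \in H^1_0(\Omega)$. Standard elliptic regularity (Lax--Milgram together with the coercivity coming from the smooth, bounded-below coefficient $\sigma$) gives the a priori estimate
\begin{align*}
\norm{e_k}_{H^1(\Omega)} \ \leq \ C \, \norm{u_k - u}_{L^2(\Omega)} \, ,
\end{align*}
where $C$ depends only on $\sigma$ and $\Omega$. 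Since the right-hand side tends to zero by the first step, we obtain $w_k \to w$ in $H^1(\Omega)$.

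Finally, since $F_1(u) = w_\nu|_{\partial\Omega}$ is the normal trace of the gradient, I would invoke the continuity of the (normal) trace operator from $H^1(\Omega)$ — or, more precisely, from the space of $H^1$-solutions of the elliptic equation whose normal derivatives lie in $L^2(\partial\Omega)$ — into $L^2(\partial\Omega)$. This yields
\begin{align*}
\norm{F_1(u_k) - F_1(u)}_{L^2(\partial\Omega)} \ = \ \norm{(w_k - w)_\nu}_{L^2(\partial\Omega)} \ \leq \ \tilde{C}\, \norm{w_k - w}_{H^1(\Omega)} \ \longrightarrow \ 0 \, ,
\end{align*}
completing the proof. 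The main obstacle I anticipate is the last step: the plain trace theorem controls $w_k-w$ and its tangential derivatives on the boundary, but the \emph{normal} derivative $w_\nu$ is not continuous on all of $H^1(\Omega)$. One must use that $e_k$ solves the elliptic PDE, so that its conormal derivative is well defined as an element of $H^{-1/2}(\partial\Omega)$ via the weak formulation, and then use additional boundary regularity (for instance $H^2$-regularity on the Lipschitz/$C^1$ domain of~\textbf{(A1)}, or the known mapping property $u \mapsto w_\nu$ into $L^2(\partial\Omega)$ referenced in \cite{DLions, HR96}) to obtain the $L^2(\partial\Omega)$ bound. Making this normal-trace estimate rigorous, rather than the routine interior estimates, is where the real work lies.
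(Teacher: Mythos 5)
Your proof follows essentially the same route as the paper's: upgrade the $L^1(\Omega)$ convergence to $L^2(\Omega)$ via the uniform bound on $D(F)$ (Lemma~\ref{lemma:0}), apply the elliptic a priori estimate $\norm{w_k-w}_{H^1(\Omega)} \leq C\norm{u_k-u}_{L^2(\Omega)}$ to the difference of solutions, and conclude by continuity of the trace. If anything you are more careful than the paper at the final step: the paper simply invokes the trace operator $H^1(\Omega)\to L^2(\partial\Omega)$, which controls $w|_{\partial\Omega}$ but not the normal derivative $w_\nu|_{\partial\Omega}$ that actually defines $F_1$, whereas you correctly flag that this point requires extra boundary regularity for the conormal derivative.
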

\begin{proof}
It is well known form the elliptic regularity theory \cite{DLions}
that $\norm{w}_{H^1(\Omega)} \leq c_1 \norm{u}_{L^2(\Omega)}$. Let
$u_n, u_0 \in D(F)$ and $w_n, w_0$ the respective solution of
\eqref{eq:ipp}. Then, the linearity and continuity of the trace
operator from $H^1(\Omega)$ to $L^2(\partial \Omega)$ \cite{DLions},
we have that
$$\norm{F_1(u_n) - F_1(u_0)}_{L^2(\partial \Omega)} \leq C \norm{w_n -
w_0}_{H^1(\Omega)}  \leq \tilde{C}  \norm{u_n -
u_0}_{L^2(\Omega)}\leq \tilde{C_1}  \norm{u_n -
u_0}_{L^1(\Omega)}\,,$$
where we use Lemma~\ref{lemma:0} to obtain the last inequality.
Therefore, $F_1$ is sequentially continuous on the $L^1(\Omega)$
topology. Since $L^1(\Omega)$ is a metrizable spaces
\cite{Yosida95}, the proof is complete.
\end{proof}

\subsubsection{A level set algorithm for the inverse potential problem}

We propose an explicit iterative algorithm derived from the
optimality conditions \eqref{eq:formal0} and \eqref{eq:formal1} for
the Tikhonov functional $\Ger$.

For the inverse potential problem with Dirichlet boundary condition
($\gamma_1=1 \mbox{ and } \gamma_2=0$) the algorithm reads as:
\begin{table}[bht!]
\fbox{ \begin{minipage}{0.96\textwidth} Given $\sigma$ and $g$;
\begin{tt}\begin{itemize}
\item[{\bf 1.}]   Evaluate the residual
$r_k := F_1( P_{\ve}(\phi_k,\psi_k^1,\psi_k^2) ) - y^\delta =
(w_k)_\nu |_{\partial\Omega} - y^\delta$, where $w_k$ solves

\centerline{\hfil $- \nabla\cdot(\sigma \nabla w_k) =
P_{\ve}(\phi_k,\psi_k^1,\psi_k^2)\, ,\ {\tt in}\ \Omega\, ; \qquad
\, w_k = g \, ,\ {\tt at}\
\partial\Omega \, .$ \hfil}

\item[{\bf 2.}]  Evaluate $h_k := F_1'(P_{\ve}(\phi_k,\psi_k^1,\psi_k^2))^* (r_k)
\in L^2(\Omega)$, solving \medskip

\centerline{\hfil $\Delta h_k = 0\, ,\ {\tt in}\ \Omega\, ; \qquad
h_k = r_k \, ,\ {\tt at}\ \partial\Omega \, .$ \hfil}

\item[{\bf 3.}] Calculate $\delta\phi_k := L_{\varepsilon,\alpha,\beta}(\phi_k,\psi_k^1,\psi_k^2)$
and $\delta \psi_k^j :=
L^j_{\varepsilon,\alpha,\beta}(\phi_k,\psi_k^1,\psi_k^2)$, as in
\eqref{eq:formal1}.

\item[{\bf 4.}] Update the level set function $\phi_k$ and the level values
$\psi_k^j$, $j = 1, 2$: \medskip

\centerline{\hfil $\phi_{k+1} = \phi_k + \frac{1}{\alpha} \;
\delta\phi_k \, ,
           \qquad \psi_{k+1}^j = \psi_k^j + \frac{1}{\alpha} \; \delta \psi_k^j \, .$ \hfil}
\end{itemize} \end{tt}
\end{minipage} }
\caption{Iterative algorithm based on the level set approach for the
inverse potential problem. \label{tab:sls} }
\end{table}

Each step of this iterative method consists of three parts (see
Table~\ref{tab:sls}): 1 - The residual $r_k \in L^2(\partial\Omega)$
of the iterate $(\phi_k, \psi_k^j)$ is evaluated (this requires
solving one elliptic BVP of Dirichlet type); 2 - The $L^2$--solution
$h_k$ of the adjoint problem for the residual is evaluated (this
corresponds to solving one elliptic BVP of Dirichlet type); 3 - The
update $\delta\phi_k$ for the level-set function and the updates
$\delta \psi_k^j$ for the level values are evaluated (this
corresponds to multiplying two functions).

In \cite{DAL09}, a level set method was proposed, where the
iteration is based on an inexact Newton type method. The inner
iteration is implemented using the conjugate gradient method.
Moreover, the regularization parameter $\alpha > 0$ is kept fixed.
In contrast to \cite{DAL09}, in Table~\ref{tab:sls}, we define
$\delta t = 1/\alpha$ (as a time increment) in order to derive an
evolution equation for the levelset function. Therefore, we are
looking for a fixed point equation related to the system of
optimality conditions for the Tikhonov functional. Moreover, the
iteration is based on a gradient type method as in \cite{DCLT2010}.

\subsection{The Inverse Problem in Nonlinear Electromagnetism}

Many interesting physical problems are model by quasi-linear
elliptic equations. Examples of applications include the
identification of inhomogeneity inside nonlinear magnetic materials
form indirect or local measurements. Electromagnetic non-destructive
tests aim to localize cracks or inhomogeneities in the steel
production process, where impurities can be described by a piecewise
smooth function, \cite{CBGI09, ChungChanTai05, BurOsher05, CVK10}.

In this section, we assume that $D \subset \subset \Omega$ is
measurable and
\begin{align}\label{eq:u}
u = \begin{cases}
     \psi_1 \,, x \in D\,,\\
     \psi_2 \,, x \in \Omega \ D\,,
\end{cases}
\end{align}
where $\psi_1, \psi_2 \in \B$ and $m>0$.

The forward problem consists of solving the Poisson boundary value
problem
\begin{equation} \label{eq:iep}
-\nabla \cdot (u \nabla   w) \ = \ f \, ,\ {\rm in} \ \Omega \, ,
\quad w \ = \ g \, \ {\rm on} \
\partial\Omega\,,
\end{equation}
where $\Omega \subset \mathbb R^n$ with $\partial \Omega$ Lipschitz,
the source $f \in H^{-1}(\Omega)$ and boundary condition $g \in
H^{1/2}(\partial \Omega)$.
It is well known that there exists a unique solution $w \in
H^1(\Omega)$ such that $w-g \in H_0^1(\Omega)$ for the
PDE~\eqref{eq:iep}, \cite{DLions}.

Assuming that during the production process the workpiece is
contaminated by impurities and that such impurities are described by
piecewise smooth function, the inverse electromagnetic problem
consist in the identification and the localization of the
inhomogeneities as well as the function values of the impurities.
The localization of support and the tabulation of the
inhomogeneities  values can indicate possible sources of
contamination in the magnetic material.

In other words, the inverse problem in electromagnetism consists in
the identification of the support (shape) and the function values of
$\psi^1, \psi^2$, of the coefficient function $u(x)$ defined in
\eqref{eq:u}. The voltage potential $g$ is chosen such that its
corresponding the current measurement $h := (w)_\nu
|_{\partial\Omega}$, available as a set of continuous measurement in
$\partial \Omega$. This problem is known in the literature as the
inverse problem for the Dirichlet-to-Neumann map \cite{Isa90}.

With this framework, the problem can be modeled by the operator
equation
\begin{align}\label{eq:operator-iep}
F_{2}\,:\, D(F) \subset L^1(\Omega) \to H^{1/2}(\partial\Omega)\nonumber\\
\quad \quad \quad u \longmapsto F_{2}(u) := w|_{\partial\Omega}\,,
\end{align}
where the potential profile $g = w|_{\partial\Omega} \in
H^{1/2}(\Omega)$ is given.

The authors in \cite{CVK10} investigated a level set approach for
solve an inverse problems of identification of inhomogeneities
inside a nonlinear material, from local measurements of the magnetic
induction. The assumption in \cite{CVK10} is that part of the
inhomogeneities are given by a crack localized inside the workpiece
and that outside the crack region,  magnetic conductivities are
nonlinear and they depends on the magnetic induction. In other
words, that $\psi_1 = \mu_1$ and $\psi_2 = \mu_2(|\nabla w|^2)$,
where $\mu_1$ is the (constant) air conductivity and
$\mu_2=\mu_2(|\nabla w|^2)$ is a nonlinear conductivity of the
workpiece material, whose values are assumed be known. In
\cite{CVK10}, they also present a successful iterative algorithm and
numerical experiment. However, in \cite{CVK10}, the measurements and
therefore the data are given in the whole $\Omega$. Such amount of
measurements are not reasonable in applications. Moreover, the
proposed level set algorithm is based on an optimality condition of
a least square functional with $H^1(\Omega)$-semi-norm
regularization. Therefore, there is no guarantee of existence of
minimum for the proposed functional.

\begin{remark}\label{remark:1}
Note that $F_2(u) = T_D(w)$, where $T_D$ is the Dirichlet trace
operator. Moreover, since $T_D: H^1(\Omega) \to H^{1/2}(\partial
\Omega)$ is linear and continuous \cite{DLions}, we have
$\norm{T_D(w)}_{H^{1/2}(\partial \Omega)} \leq c
\norm{w}_{H^1(\Omega)}$.
%, is enough prove the continuity of the application
%$D(F) \ni u \longmapsto w(u) \in H^1(\Omega)$, where $w(u)$ is the
%corresponding solution of \eqref{eq:iep}.
\end{remark}

In the following lemma,  we prove that the operator $F_2$ satisfies
the Assumption {\bf(A2)}.
\begin{lemma}
Let the operator $F_2\,:\, D(F) \subset L^1(\Omega) \longrightarrow
H^{1/2}(\partial \Omega)$ as defined in \eqref{eq:operator-iep}.
Then, $F_2$ is continuous with the respect to the $L^1(\Omega)$
topology.
\end{lemma}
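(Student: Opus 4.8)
The plan is to establish sequential continuity of $F_2$ on $D(F)$ with respect to the $L^1(\Omega)$ topology and then, exactly as in the previous lemma, invoke the metrizability of $L^1(\Omega)$ \cite{Yosida95} to conclude full continuity. So I fix $u_0 \in D(F)$ and take $u_n \in D(F)$ with $u_n \to u_0$ in $L^1(\Omega)$. Since every element of $D(F)$ is of the form $q(v,\psi^1,\psi^2)$ with $\psi^j \in \B$, each coefficient obeys $m \le u_n, u_0 \le M$ a.e.; because $m>0$ is assumed here (see \eqref{eq:u}), the PDE \eqref{eq:iep} is uniformly elliptic with ellipticity constant $m$ and upper bound $M$ \emph{independent of} $n$. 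I emphasize that this is where the argument must differ from the inverse potential lemma: there $u$ was the source and $u \mapsto w$ was linear and bounded directly by elliptic regularity, whereas here $u$ is the coefficient inside the divergence, so the solution map is nonlinear and an energy comparison is needed. Concretely, I would fix an extension $G \in H^1(\Omega)$ with $T_D(G)=g$ and write $w_n = v_n + G$, $w_0 = v_0 + G$ with $v_n, v_0 \in H_0^1(\Omega)$; existence and uniqueness of each $w_n$ is guaranteed by the well-posedness statement preceding the lemma \cite{DLions}, and the weak formulation reads $\int_\Omega u_n \nabla w_n \cdot \nabla \varphi = \langle f,\varphi\rangle$ for all $\varphi \in H_0^1(\Omega)$, and similarly for $u_0$.

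The heart of the proof is an energy estimate obtained by subtracting the two weak formulations (note $f$ and $g$ are fixed along the sequence) and regrouping $u_n \nabla w_n - u_0\nabla w_0 = u_n\nabla(w_n-w_0) + (u_n-u_0)\nabla w_0$, which gives, for all $\varphi \in H_0^1(\Omega)$,
\[
\int_\Omega u_n \nabla(w_n-w_0)\cdot\nabla\varphi \;=\; -\int_\Omega (u_n-u_0)\,\nabla w_0\cdot\nabla\varphi .
\]
Testing with $\varphi = w_n - w_0 \in H_0^1(\Omega)$, bounding the left side from below by coercivity (constant $m$) and the right side by Cauchy--Schwarz, I obtain
\[
m\,\|\nabla(w_n-w_0)\|_{L^2(\Omega)}^2 \;\le\; \|(u_n-u_0)\nabla w_0\|_{L^2(\Omega)}\,\|\nabla(w_n-w_0)\|_{L^2(\Omega)},
\]
hence $\|\nabla(w_n-w_0)\|_{L^2(\Omega)} \le m^{-1}\|(u_n-u_0)\nabla w_0\|_{L^2(\Omega)}$, and by Poincar\'e this controls the full $H^1(\Omega)$-norm of $w_n - w_0$.

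The main obstacle is then showing $\|(u_n-u_0)\nabla w_0\|_{L^2(\Omega)} \to 0$, since the coefficients converge only in $L^1$ while the estimate demands an $L^2$-type control against $\nabla w_0$. Here I would exploit the uniform bound $|u_n-u_0| \le M-m$ together with $|\nabla w_0|^2 \in L^1(\Omega)$: the integrand $(u_n-u_0)^2|\nabla w_0|^2$ admits the integrable, $n$-independent majorant $(M-m)^2|\nabla w_0|^2$. Since $u_n \to u_0$ in $L^1(\Omega)$, every subsequence has a further subsequence converging a.e. to $u_0$, along which the dominated convergence theorem forces $\|(u_n-u_0)\nabla w_0\|_{L^2(\Omega)} \to 0$; the subsequence-of-subsequence principle then upgrades this to the whole sequence (alternatively, Lemma~\ref{lemma:0} passes the convergence of $u_n$ from $L^1$ to $L^2$). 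Consequently $w_n \to w_0$ in $H^1(\Omega)$, and by the continuity of the Dirichlet trace $T_D : H^1(\Omega) \to H^{1/2}(\partial\Omega)$ recorded in Remark~\ref{remark:1} we get $F_2(u_n) = T_D(w_n) \to T_D(w_0) = F_2(u_0)$ in $H^{1/2}(\partial\Omega)$. Sequential continuity together with the metrizability of $L^1(\Omega)$ completes the proof.
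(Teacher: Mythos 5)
Your proof is correct, and it reaches the same energy estimate as the paper --- subtracting the two weak formulations, regrouping as $u_n\nabla(w_n-w_0)+(u_n-u_0)\nabla w_0$, testing with $w_n-w_0\in H^1_0(\Omega)$, and using the ellipticity constant $m$ --- but it diverges at the key analytic step of showing $\|(u_n-u_0)\nabla w_0\|_{L^2(\Omega)}\to 0$. The paper invokes Meyers' higher-integrability theorem to get $\nabla w_0\in L^p(\Omega)$ for some $p=2+\varepsilon$, applies H\"older with $1/p+1/q=1/2$, and then uses Lemma~\ref{lemma:0} to upgrade $L^1$-convergence of $u_n$ to $L^q$-convergence; this yields the quantitative bound $\|w_n-w_0\|_{H^1(\Omega)}\le C\|u_n-u_0\|_{L^q(\Omega)}$. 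You instead exploit the uniform bound $m\le u_n,u_0\le M$ to dominate $(u_n-u_0)^2|\nabla w_0|^2$ by the $n$-independent integrable majorant $(M-m)^2|\nabla w_0|^2$ and conclude by dominated convergence along a.e.-convergent subsequences plus the subsequence-of-subsequences principle. Your route is more elementary (no $W^{1,p}$ regularity needed, only the energy estimate $\nabla w_0\in L^2$) and fully suffices for sequential continuity; what it gives up is the explicit modulus of continuity in an $L^q$-norm that the paper's Meyers-based argument provides. One caveat: your parenthetical remark that Lemma~\ref{lemma:0} alone would do the job is not accurate as a standalone alternative, since $L^2$-convergence of $u_n$ paired with $\nabla w_0\in L^2(\Omega)$ only does not control $\int_\Omega(u_n-u_0)^2|\nabla w_0|^2\,dx$ --- some higher integrability of $\nabla w_0$ (as in the paper) or the dominated-convergence argument (as in your main line) is genuinely needed; I would delete that aside.
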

\begin{proof}
Let $u_n, u_0 \in D(F)$ and $w_n, w_0$ denoting the respective
solution of \eqref{eq:ipp}. The linearity of equation \eqref{eq:iep}
implies that $w_n - w_0 \in H_0^1(\Omega)$ and it satisfies
\begin{align}\label{eq:wf}
\nabla \cdot ( u_n \nabla w_n) - \nabla \cdot ( u_0 \nabla w_0) =
0\,,
\end{align}
with homogeneous boundary condition. Therefore, using the weak
formulation for \eqref{eq:wf} we have
\begin{align*}
\int_\Omega \left( \nabla \cdot ( u_n \nabla w_n) - \nabla \cdot (
u_0 \nabla w_0) \right) \varphi dx = 0\,, \quad \forall \varphi \in
H^1_0(\Omega)\,.
\end{align*}
In particular, the weak formulation holds true for $\varphi = w_n -
w_0$. From the Green formula \cite{DLions} and the assumption that
$m>0$ (that guarantee  elipticity of \eqref{eq:iep}), follows that
\begin{align}\label{eq:est1}
m \norm{\nabla w_n - \nabla w_0}^2_{L^2(\Omega)} %& = \int_\Omega m |\nabla w_n - \nabla w_0|^2 dx
\leq \int_\Omega u_n |\nabla w_n - \nabla w_0|^2 dx %\\ & = \int_\Omega (u_n - u_0) \nabla w_0 (\nabla w_n - \nabla w_0) dx
\leq \int_\Omega |(u_n - u_0)| |\nabla w_0| |(\nabla w_n - \nabla
w_0)| dx\,.
\end{align}
From \cite[Theorem 1]{M63}, there exist $\varepsilon > 0$ (small
enough) such that $ w_0 \in W^{1,p}(\Omega)$ for $p = 2+
\varepsilon$. Using the H\"{o}lder inequality \cite{DLions} with
$1/p + 1/q = 1/2$ (note that $q> 2$ in the equation~\eqref{eq:est1},
follows that
\begin{align}\label{eq:a}
m \norm{\nabla w_n - \nabla w_0}^2_{L^2(\Omega)}  \leq \norm{u_n -
u_0}_{L^q(\Omega)} \norm{\nabla w_0}_{L^p(\Omega)} \norm{\nabla w_n
- \nabla w_0}_{L^2(\Omega)}\,.
\end{align}
Therefore, using the Poincar\'e inequality \cite{DLions} and
equation~\eqref{eq:a}, we have
\begin{align*}
\norm{ w_n -  w_0}_{H^1(\Omega)}  \leq C \norm{u_n -
u_0}_{L^q(\Omega)}\,,
\end{align*}
where the constant $C$ depends only of $m, \Omega,\norm{\nabla w_0}$
and the Poincar\'e constant. Now, the assertion follows from
Lemma~\ref{lemma:0} and Remark~\ref{remark:1}.
\end{proof}

\subsubsection{A level set algorithm for inverse problem in nonlinear electromagnetism}

We propose an explicit iterative algorithm derived from the
optimality conditions \eqref{eq:formal0} and \eqref{eq:formal1} for
the Tikhonov functional $\Ger$. Each iteration of this algorithm
consists in the following steps: In the first step the residual
vector $r\in L^2(\partial\Omega)$ corresponding to the iterate
$(\phi_n,\psi_n^1, \psi_n^2)$ is evaluated. This requires the
solution of one elliptic BVP's of Dirichlet type. In the second step
the solutions $v \in H^1(\Omega)$ of the adjoint problems for the
residual components $r$ are evaluated. This corresponds to solving
one elliptic BVP of Neumann type and to computing the inner-product
$ \nabla w \cdot \nabla v$ in $L^2(\Omega)$. Next, the computation
of $L_{\varepsilon,\alpha, \beta}(\phi_n,\psi_n^1, \psi_n^2)$ and
$L^j_{\varepsilon,\alpha, \beta}(\phi_n,\psi_n^1, \psi_n^2)$ as in
\eqref{eq:formal1}. The four step is the updates of the level-set
function $\delta\phi_n \in H^1(\Omega)$ and the level function
values $\delta\psi_n^j \in \bv(\Omega)$ by solve \eqref{eq:formal0}.

The algorithm is summarized in Table~\ref{tab:ls}.

\begin{table}[bht!]
\fbox{ \begin{minipage}{16.3cm}
\begin{tt}\begin{itemize}
\item[{\bf 1.}]  Evaluate the residual
$r:= F_2 (P_\varepsilon(\phi_n,\psi^1_n, \psi^2_n) ) - y^\delta =
 w|_{\partial\Omega} - g^\delta $, where $w \in H^1(\Omega) $
solves
%\medskip

\centerline{\hfil $\nabla \!\cdot\! ( P_\varepsilon(\phi_n,\psi_n^1,
\psi_n^2) \, \nabla w ) = f \, , \ {\tt in}\ \Omega\, ; \qquad  \, w
= g \, ,\ {\tt at}\ \partial\Omega \, .$ \hfil}

\item[{\bf 2.}]  Evaluate $F_2'(P_\varepsilon(\phi_n,\psi_n^1,\psi_n^2))^* \, r  :=
 \nabla w \cdot \nabla v \in L^2(\Omega)$,
where $w$ is the function computed in Step {\bf 1.} and $v \in
H^1(\Omega)$  solves %\medskip

\centerline{$\nabla \!\cdot\! ( P_\varepsilon(\phi_n,\psi_n^1,
\psi_n^2) \, \nabla v ) = 0 \ {\tt in}\ \Omega\, ; \qquad  \, v_\nu
= r \, ,\ {\tt at}\ \partial\Omega \, .$ \hfil}

\item[{\bf 3.}] Calculate $L_{\varepsilon,\alpha, \beta}(\phi_n,\psi_n^1,
\psi_n^2)$ and $L^j_{\varepsilon,\alpha, \beta}(\phi_n,\psi_n^1,
\psi_n^2)$ as in \eqref{eq:formal1}.

\item[{\bf 4.}] Evaluate the updates $\delta\phi \in H^1(\Omega)$, $\delta\psi^j \in \bv(\Omega)$ by
solving \eqref{eq:formal0} \medskip

\item[{\bf 5.}] Update the level set functions\,
$\phi_{n+1} = \phi_n + \frac{1}{\alpha}  \delta\phi$, \, and the
level function values $\psi_{n+1}^j = \psi_n^j + \frac{1}{\alpha}
\delta\psi^j$.
\end{itemize} \end{tt}
\end{minipage} }
\caption{An explicit algorithm based on the proposed level set
iterative regularization method. \label{tab:ls} }
\end{table}

\section{Conclusions and Future Directions}\label{sec:conclusions}

In this article, we generalize the results of convergence and
stability of the level set regularization approach proposed in
\cite{DCLT09, DCLT2010}, where the level values of discontinuities
are not piecewise constant inside of each region. We analyze the
particular case, where the set $\Omega$ is divide in two regions.
However, it is easy to extend the analysis for the case of multiple
regions adapting the multiple level set approach in \cite{CLT08,
DCLT09}.

We apply the level set framework for two problems: the inverse
potential problem and in an inverse problem in nonlinear
electromagnetism with piecewise non-constant solution. In both case,
we prove that the parameter-to-solution map satisfies the
Assumption~\textbf{(A1)}. The inverse potential problem application
is a natural generalization of the problem computed in \cite{CLT08,
DCLT2010, DCLT09}. We also investigate the applicability of an
inverse problem in nonlinear electromagnetism in the identification
of inhomogeneities inside a nonlinear magnetic workpiece. Moreover,
we propose iterative algorithm based on the optimality condition of
the smooth Tikhonov functional $\Ger$.

A natural continuation of this paper is the numerical
implementation. Level set numerical implementations for the inverse
potential problem was done before in \cite{CLT08, DCLT09, DCLT2010},
where the level values are assumed to be constant. Implementations
of level set methods for resistivity/conductivity problem in
elliptic equation  have been intensively implemented recently e.g.,
\cite{ChungChanTai05, DL09, SLD06, DA07, DA06, CVK10, BurOsher05}.

%%%%----------------------------------------------------------------------------%
\section*{Acknowledgments}
A.D.C. acknowledges the partial support from IMEF - FURG.

%%%BIBLIOGRAPHY

\bibliographystyle{amsplain}
\bibliography{Final_v_123643_lset-const}

\end{document}